\documentclass[12pt]{article} 

\usepackage{fullpage}
\usepackage[intlimits]{amsmath}
\usepackage{amssymb, amsfonts, amsthm, enumerate,amsmath}
\usepackage{commath}
\usepackage{url}
\usepackage{graphicx}
\usepackage{graphics}
\usepackage{booktabs}

\usepackage{color}
\usepackage{parskip}
\usepackage{xcolor}
\usepackage{tikz}
\usepackage{undertilde}

\usepackage{color,hyperref}
\definecolor{darkblue}{rgb}{0.0,0.0,0.3}
\hypersetup{colorlinks,breaklinks,
	linkcolor=darkblue,urlcolor=darkblue,
	anchorcolor=darkblue,citecolor=darkblue}

\theoremstyle{plain}
\newtheorem{thm}{Theorem}[section]
\newtheorem{lem}[thm]{Lemma}

\newtheorem*{cor}{Corollary}

\theoremstyle{definition}
\newtheorem{defn}{Definition}[section]

\newtheorem{exmp}{Example}[section]

\title{Characterization of Extreme Copulas}

\author{Partha Pratim Ghosh,\,\,\, Subir Kumar Bhandari\\}

\date{}

\begin{document}

	\maketitle
	
	\begin{abstract}
		In this paper our aim is to characterize the set of extreme points of the set of all $n$-dimensional copulas $(n>1)$. We have shown that a copula must induce a singular measure with respect to Lebesgue measure in order  to be an extreme point in the set of  $n$-dimensional copulas. We also have discovered some sufficient conditions for a copula to be an extreme copula. We have presented a construction of a small subset of $n$-dimensional extreme copulas such that any $n$-dimensional copula is a limit point of that subset with respect to weak convergence. The applications of such a theory are widespread, finding use in many facets of current mathematical research, such as distribution theory, survival analysis, reliability theory and optimization purposes. To illustrate the point further, examples of how such extremal representations can help in optimization have also been included.   
	\end{abstract}
	
	\section{Introduction}
	
	\hspace*{3mm} Copula models are popular in high dimensional statistical applications due to their ability to describe the dependence structure among random variables; see, e.g. \hyperlink{ref9}{[9]}, \hyperlink{ref10}{[10]}. In a situation where we need to study the influence of dependence structure on a statistical problem with given marginals of some random vector, it would be helpful if we do not have to study dependence structure over the class of all copulas- only a small class of copulas would do the job. In  \hyperlink{ref6}{[6]}, \hyperlink{ref7}{[7]}
	a special case has been considered. For a bi-variate random vector $(X,Y)$ with continuous and strictly increasing marginal distribution functions they have found a copula under which the probability of the event $\left\{X=Y\right\}$ is maximal. However, our work is applicable in more general scenarios.  Using Krein-Milman theorem (see e.g. \hyperlink{ref2}{[2]}) we can see that study of dependence structure over only the convex hull of the extreme copulas is enough. This motivates us to study and characterize the set of extreme copulas. It is also interesting from mathematical point of view. In this paper we aim to provide some properties such that any copula satisfying these properties will be an extreme copula. We also have shown that the probability induced by any extreme copula has to be singular with respect to Lebesgue measure. This shows how small the class of extreme copulas is. Finally, we have shown a particularly strong result that says we do not need to consider even the convex hull of the extreme points, but only a small subset of these extreme points in order to study the influence of dependence structure.

	\section{Preliminaries}
	
	\hspace*{3mm} First, we recall the definition  of copula. see, e.g. \hyperlink{ref5}{[5]}.\\
	\begin{defn}
		An ${n}${-dimensional copula} is an $n$-dimensional distribution function concentrated on $\left[0,1\right]^n$ whose univariate marginals are uniformly distributed on $\left[0,1\right]$.
	\end{defn}
	
	\hspace*{3mm} Clearly the set of all copulas is a convex set and it is closed (with respect to $ d_{\infty}$ metric), uniformly bounded by 1 and every copula is 1-Lipschitz continuous. Hence by Arzel\`{a}-Ascoli theorem (see, e.g. \hyperlink{ref3}{[3]}) the set of all copulas is compact. Again, as it is convex and compact by Krein-Milman theorem (see, e.g. \hyperlink{ref4}{[4]})  the set of all copulas is the closure (with respect to $d_{\infty}$ metric) of the convex hull of its extreme points. So we will study the set of extreme points.
	
	\section{Notations}
	\hspace*{3mm} For  a Borel-measurable function $f : \left[0,1\right]\mapsto\left[0,1\right]^{n-1}$ denote the graph of $f$ as
	\[
	\mathcal{G}_f=\left\{ \left(x,f(x)\right) \mid x\in \left[0,1\right] \right\}
	\]
	Write $f$ as $(g_1,g_2,\ldots,g_{n-1})$. We denote \[\mathcal{G}_f^{(i)}=\left\{(g_1(x),\ldots,g_{i-1}(x),x,g_i(x),\ldots,g_{n-1}(x)    ) \mid x\in \left[0,1\right] \right\}\]
	We denote the projection on $i$th co-ordinate  by $\pi_i$ i.e. $\pi_i:\left[0,1\right]^n\mapsto\left[0,1\right]$ such that $\pi_i\left((x_1,x_2,\ldots,x_n)\right)=x_i$. Now for any probability $P$ on $\left[0,1\right]^n$ we denote $P\circ\pi_i^{-1}$, the $i$th marginal of $P$ by $P_i$. Also, we denote  the Lebesgue measure by $\lambda$.\\
	
	Define the $n$-dimensional square $S_{\utilde{a},\varepsilon}$ as $\prod_{i=1}^{n}\left[a_i,a_i+\varepsilon\right]$.
	Now denote $R_{\utilde{a},\utilde{b}}$ as the $n$-dimensional rectangle formed by the two points $\utilde{a}$ and $\utilde{b}$ i.e. 
	${R_{\utilde{a},\utilde{b}}}=\{\utilde{x}\in \mathbb{R}^n \mid \utilde{a}\leq \utilde{x}\leq\utilde{b} \}$.\\
	
	We denote $u_j(t)=(x_1,x_2,\ldots,x_n)$ such that $x_j=t$ and $x_k=1$ for all $k\neq j$ and $v_j(t)=(y_1,y_2,\ldots,y_n)$ such that $y_j=t$ and $y_k=0$ for all $k\neq j$.

	\section{Sufficient Conditions}
	\hspace*{3mm} In this section we provide certain sufficient conditions for a copula to be an extreme copula. \\
	\begin{thm} 
		\hypertarget{theo.4.1}{}
		Let $f : \left[0,1\right]\mapsto\left[0,1\right]^{n-1}$ be a Borel measurable function and $\mu$ be a probability measure on $\left(\left[0,1\right],\mathcal{B}(\left[0,1\right])\right)$. Then there exists a unique $n$-dimensional probability supported on $\mathcal{G}_f$ whose first marginal is $\mu$.
	\end{thm}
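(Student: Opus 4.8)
The plan is to realize the desired measure as a pushforward of $\mu$ along the graph map, and then to show that being concentrated on a graph rigidly determines the measure from its first marginal. Define $\Phi : [0,1] \to [0,1]^n$ by $\Phi(x) = (x, f(x))$. Since $f = (g_1,\ldots,g_{n-1})$ is Borel measurable, every coordinate of $\Phi$ is Borel, hence $\Phi$ is Borel measurable. For existence I would set $P = \mu \circ \Phi^{-1}$, the image measure; this is a probability on $[0,1]^n$. Because $\Phi$ takes values in $\mathcal{G}_f$, we get $P(\mathcal{G}_f) = \mu(\Phi^{-1}(\mathcal{G}_f)) = \mu([0,1]) = 1$, so $P$ is supported on $\mathcal{G}_f$. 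Moreover $\pi_1 \circ \Phi = \mathrm{id}_{[0,1]}$, so the first marginal is $P_1 = \mu \circ \Phi^{-1}\circ \pi_1^{-1} = \mu \circ (\pi_1\circ\Phi)^{-1} = \mu$.

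For uniqueness I would take an arbitrary probability $Q$ on $[0,1]^n$ concentrated on $\mathcal{G}_f$ with first marginal $\mu$, and compare $Q$ and $P$ on an arbitrary Borel set $A \subseteq [0,1]^n$. The crux is that on the graph the first coordinate recovers the entire point: writing $B = \Phi^{-1}(A) \in \mathcal{B}([0,1])$, a point $(x,f(x)) \in \mathcal{G}_f$ lies in $A$ iff $x \in B$, and it lies in $\pi_1^{-1}(B)$ iff $x \in B$ as well; hence $A \cap \mathcal{G}_f = \pi_1^{-1}(B) \cap \mathcal{G}_f$. Since $Q([0,1]^n \setminus \mathcal{G}_f) = 0$, this gives
\[
Q(A) = Q(A \cap \mathcal{G}_f) = Q(\pi_1^{-1}(B) \cap \mathcal{G}_f) = Q(\pi_1^{-1}(B)) = \mu(B) = \mu(\Phi^{-1}(A)) = P(A),
\]
where the fifth equality uses that the first marginal of $Q$ is $\mu$. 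As $A$ was arbitrary, $Q = P$.

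The step that needs the most care is checking that the statement is well-posed, i.e. that $\mathcal{G}_f$ is a Borel subset of $[0,1]^n$, so that "supported on $\mathcal{G}_f$" (read as full measure on $\mathcal{G}_f$) is unambiguous and the sets $B = \Phi^{-1}(A)$, $\pi_1^{-1}(B)$, and $A \cap \mathcal{G}_f$ above are all measurable. I would handle this by writing $\mathcal{G}_f$ as the preimage of the closed diagonal $\Delta = \{(z,z) : z \in [0,1]^{n-1}\}$ under the Borel map $(x,y) \mapsto (f(x),y)$ from $[0,1]\times[0,1]^{n-1}$ to $[0,1]^{n-1}\times[0,1]^{n-1}$; as the preimage of a closed set under a Borel map, $\mathcal{G}_f$ is Borel. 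Once this is established, everything else reduces to the routine verification of the pushforward identities above, so the only genuine obstacle is the measurability bookkeeping rather than any analytic difficulty.
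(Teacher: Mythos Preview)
Your proof is correct and, at the level of ideas, coincides with the paper's: both exploit that on $\mathcal{G}_f$ the first coordinate recovers the whole point, so the first marginal determines the measure. The packaging differs. The paper builds $P$ by hand on the semi-field of measurable rectangles via $P(S_1\times S_2)=\mu(S_1\cap f^{-1}(S_2))$ and invokes Carath\'eodory for existence, then checks $\bar P=P$ on the same semi-field and invokes uniqueness of extension. You instead define $P$ as the pushforward $\mu\circ\Phi^{-1}$, which gives a Borel measure on all of $[0,1]^n$ in one stroke, and for uniqueness you argue directly on an arbitrary Borel set $A$ using $A\cap\mathcal{G}_f=\pi_1^{-1}(\Phi^{-1}(A))\cap\mathcal{G}_f$. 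Your route is slightly more economical (no extension theorem needed), and you also supply the measurability of $\mathcal{G}_f$, which the paper uses implicitly but does not verify; the paper's route has the minor advantage of making the formula $P(S_1\times S_2)=\mu(S_1\cap f^{-1}(S_2))$ explicit, which is convenient in later computations.
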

	\begin{proof}
		At first we will show that there exists an $n$-dimensional probability supported on $\mathcal{G}_f$ whose first marginal is $\mu$. It will be enough to get such an $n$-dimensional probability $P$ on $\{S_1\times S_2 \mid S_1 \in \mathcal{B}(\left[0,1\right]), S_2 \in \mathcal{B}(\left[0,1\right]^{n-1}) \}$ as it is a semi-field for $\mathcal{B}\left(\left[0,1\right]^{n}\right)$. Define P on this semi-field as
		\[P(S_1\times S_2)=\mu(S_1\cap f^{-1}(S_2))\]
		for all $S_1 \in \mathcal{B}(\left[0,1\right])$ and $S_2 \in \mathcal{B}(\left[0,1\right]^{n-1}) $.
		Clearly it is a probability and by Caratheodory Extension Theorem  $P$ can be defined on  $\mathcal{B}\left(\left[0,1\right]^{n}\right)$. Observe that $P$ is supported on $\mathcal{G}_f$ and its first marginal is $\mu$. \\
		
		\hspace*{3mm} Now it remains to show is that $P$ is the unique probability with the required property. Let $\bar{P}$ be another  $n$-dimensional probability supported on $\mathcal{G}_f$ whose first marginal is $\mu$. Then for all $S_1 \in \mathcal{B}(\left[0,1\right])$ and $S_2 \in \mathcal{B}(\left[0,1\right]^{n-1} )  $ we have\\
		\begin{equation*}
		\begin{split}
		\bar{P}(S_1\times S_2) &= \bar{P}\left((S_1\times S_2)\cap\mathcal{G}_f \right)\\
		&= \bar{P}\left( \left\{(x,f(x)) \mid x \in S_1, f(x) \in S_2 \right\} \right)\\
		&= \bar{P}\left( \left\{(x,f(x)) \mid x \in S_1\cap f^{-1}(S_2) \right\} \right)\\
		&= \bar{P}\left( \left\{(x,y) \mid x \in S_1\cap f^{-1}(S_2), y\in \left[0,1\right]^{n-1} \right\} \right)\\
		&= \mu(S_1\cap f^{-1}(S_2))\\
		&= P(S_1\times S_2)
		\end{split}
		\end{equation*} 
		As $\{S_1\times S_2 \mid S_1 \in \mathcal{B}(\left[0,1\right]), S_2 \in \mathcal{B}(\left[0,1\right]^{n-1}) \}$ is a semi-field for $\mathcal{B}\left(\left[0,1\right]^{n}\right)$, by Caratheodory extension theorem $\bar{P}=P$. Hence $P$ is the unique probability supported on $\mathcal{G}_f$ whose first marginal is $\mu$.
	\end{proof}
	\begin{cor}
		For any Borel measurable function $f : \left[0,1\right]\mapsto\left[0,1\right]^{n-1}$ any copula supported on $\mathcal{G}_f$ is an extreme copula.
	\end{cor}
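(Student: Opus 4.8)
The plan is to argue by contradiction against extremality, reducing everything to the uniqueness statement of Theorem \hyperlink{theo.4.1}{4.1}. Let $C$ be a copula supported on $\mathcal{G}_f$ and let $P$ denote the probability measure it induces on $\left[0,1\right]^n$. Suppose $C$ were not extreme, so that $C=\frac{1}{2}(C_1+C_2)$ for two copulas $C_1\neq C_2$ with induced measures $P_1,P_2$; passing to measures this reads $P=\frac{1}{2}(P_1+P_2)$. The goal is to show $P_1=P_2=P$, which contradicts $C_1\neq C_2$.

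Before the main argument I would record that $\mathcal{G}_f$ is a Borel subset of $\left[0,1\right]^n$: since $f$ is Borel measurable, the map $(x,y)\mapsto y-f(x)$ is Borel, and $\mathcal{G}_f$ is the preimage of $\{0\}$ under it, hence Borel. This is what makes the statement $P(\mathcal{G}_f)=1$ meaningful and lets us speak of $P(\mathcal{G}_f^c)$.

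The key step is to show that $P_1$ and $P_2$ are each themselves supported on $\mathcal{G}_f$. Because $P(\mathcal{G}_f)=1$ we have $P(\mathcal{G}_f^c)=0$, and since $P(\mathcal{G}_f^c)=\frac{1}{2}\left(P_1(\mathcal{G}_f^c)+P_2(\mathcal{G}_f^c)\right)$ is a sum of two nonnegative terms equal to zero, both $P_1(\mathcal{G}_f^c)$ and $P_2(\mathcal{G}_f^c)$ must vanish; hence $P_1(\mathcal{G}_f)=P_2(\mathcal{G}_f)=1$. This is the heart of the argument and the only place the convex decomposition enters — it uses nothing beyond the nonnegativity of measures. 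I expect this support-inheritance observation to be the one genuinely load-bearing step; everything else is bookkeeping.

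Finally I would invoke Theorem \hyperlink{theo.4.1}{4.1} with $\mu=\lambda$. Every copula has uniform univariate marginals, so the first marginal of each of $P$, $P_1$, and $P_2$ equals the Lebesgue measure $\lambda$ on $\left[0,1\right]$. Thus $P$, $P_1$, and $P_2$ are all $n$-dimensional probabilities supported on $\mathcal{G}_f$ whose first marginal is $\lambda$, and by the uniqueness half of Theorem \hyperlink{theo.4.1}{4.1} they must coincide. Therefore $P_1=P_2=P$, equivalently $C_1=C_2=C$, contradicting $C_1\neq C_2$. Hence $C$ admits no nontrivial convex decomposition and is an extreme copula.
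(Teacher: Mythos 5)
Your proof is correct and takes essentially the same route as the paper's: both arguments rest on the observation that any copula appearing in a convex decomposition of $C$ must itself assign zero mass to $\mathcal{G}_f^{c}$, and then invoke the uniqueness part of Theorem 4.1 (with $\mu$ the uniform first marginal) to force the components to coincide. Your explicit check that $\mathcal{G}_f$ is a Borel set is a small point of rigor the paper leaves implicit, but otherwise the two proofs are the same.
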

	\begin{proof}
		Let $f : \left[0,1\right]\mapsto\left[0,1\right]^{n-1}$ be a Borel measurable function. Suppose there exists a copula $C$ supported on $\mathcal{G}_f$. As the first marginal of every copula is $\mathbb{U}\left[0,1\right]$, by using  \hyperlink{theo.4.1}{Theorem 4.1} we can conclude that it is the unique copula supported on $\mathcal{G}_f$. If there exist any two copulas $C_1$ and $C_2$ such that $C$ can be written as a convex combination of them, then both $C_1$ and $C_2$ need to give zero measure on ${\mathcal{G}_f}^c$. Hence $C_1=C_2=C$, which implies $C$ is an extreme copula.
	\end{proof}
	\hspace*{3mm} For an extreme copula $C$ and for any permutation $\sigma$ on $\left\{1,2,\ldots,n\right\}$ the copula $C_\sigma$ defined as $C_\sigma(x_1,x_2,\ldots,x_n)=C(x_{\sigma(1)},x_{\sigma(2)},\ldots,x_{\sigma(n)})$ is also an extreme copula. Clearly there exists a copula supported on $\mathcal{G}_f^{(i)}$ iff  there exists a copula supported on $\mathcal{G}_f^{(j)}$
	for any $i,j \in \left\{1,2,\ldots,n\right\}$.\\
	\begin{cor}
		There are uncountably many extreme copulas on $\left[0,1\right]^n$.
	\end{cor}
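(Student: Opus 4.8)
The plan is to exploit the previous corollary, which guarantees that any copula supported on the graph of a Borel function is extreme; it therefore suffices to exhibit an uncountable family of pairwise distinct copulas, each concentrated on such a graph. For each $t \in [0,1)$ I would take the shift modulo one, $\tau_t : [0,1] \to [0,1]$ given by $\tau_t(x) = x + t - \lfloor x + t \rfloor$, which is a Borel bijection preserving $\lambda$. Define $f_t : [0,1] \to [0,1]^{n-1}$ by $f_t(x) = \left( \tau_t(x), \ldots, \tau_t(x) \right)$, a Borel measurable function. Applying \hyperlink{theo.4.1}{Theorem 4.1} with $\mu = \lambda$ then produces a unique probability $P_t$ supported on $\mathcal{G}_{f_t}$ whose first marginal is $\lambda$.

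Next I would verify that each $P_t$ is in fact a copula, that is, that all of its one-dimensional marginals are uniform. Its first marginal is $\lambda$ by construction, so only the marginals $P_t \circ \pi_i^{-1}$ for $i \geq 2$ remain. Since $P_t$ is supported on $\mathcal{G}_{f_t}$, along this support the $i$-th coordinate coincides with $\tau_t$ applied to the first coordinate, whence $P_t \circ \pi_i^{-1} = \lambda \circ \tau_t^{-1} = \lambda$, using that $\tau_t$ preserves Lebesgue measure. Thus every marginal is uniform, so $P_t$ is a copula, and by the preceding corollary it is an extreme copula.

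Finally I would show that the map $t \mapsto P_t$ is injective. For $t \neq t'$ in $[0,1)$ the graphs $\mathcal{G}_{f_t}$ and $\mathcal{G}_{f_{t'}}$ are disjoint, because $\tau_t(x) = \tau_{t'}(x)$ for some $x$ would force $t \equiv t' \pmod 1$, i.e. $t = t'$. Consequently $P_t$ assigns full mass to $\mathcal{G}_{f_t}$ while $P_{t'}$ assigns it mass zero, so $P_t \neq P_{t'}$. Since $[0,1)$ is uncountable, this exhibits uncountably many distinct extreme copulas on $[0,1]^n$.

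The measure-preservation of $\tau_t$ and the marginal computation are routine; the one point deserving genuine care is the distinctness step. There one must argue not merely that the defining functions $f_t$ differ, but that the resulting measures are different — which I secure by observing that distinct parameters give disjoint (hence mutually singular) supports, so that the copulas cannot agree even up to $\lambda$-null modifications.
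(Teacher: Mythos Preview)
Your proof is correct and follows the same overall strategy as the paper: invoke the preceding corollary by exhibiting a one-parameter family of copulas, each supported on the graph of a Borel function $[0,1]\to[0,1]^{n-1}$, and then verify distinctness. The paper's construction is different in detail: for each $t\in(0,1)$ it takes the broken line consisting of the segment from $\utilde{0}$ to $(t,1,\dots,1)$ together with the segment from $(t,1,\dots,1)$ to $(1,0,\dots,0)$, and places the mixture $t\,\mathbb{U}(L_1)+(1-t)\,\mathbb{U}(L_2)$ on it; this broken line is the graph of a piecewise-linear function of the first coordinate. Your modular shifts $\tau_t$ produce a different but equally valid family (essentially translates of the diagonal copula, cf.\ Example~4.4 later in the paper), and your distinctness argument via pairwise disjoint supports is arguably cleaner than comparing c.d.f.\ values. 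One small inaccuracy: $\tau_t$ is not a bijection of $[0,1]$ onto itself (for $t\in(0,1)$ one has $\tau_t(0)=\tau_t(1)=t$), but your argument only uses that $\tau_t$ is Borel and preserves $\lambda$, both of which hold, so this does not affect the proof.
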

	\begin{proof}
		Fix  $t\in (0,1)$. Let $L_1$ be the line joining $\utilde{0}$ and $(t,1,1,\ldots,1)$ and $L_2$ be the line joining $(t,1,1,\ldots,1)$ and $(1,0,0,\ldots,0)$. See the figure for $n=2$.\\
		\begin{center}
			\scalebox{.45}{ \includegraphics{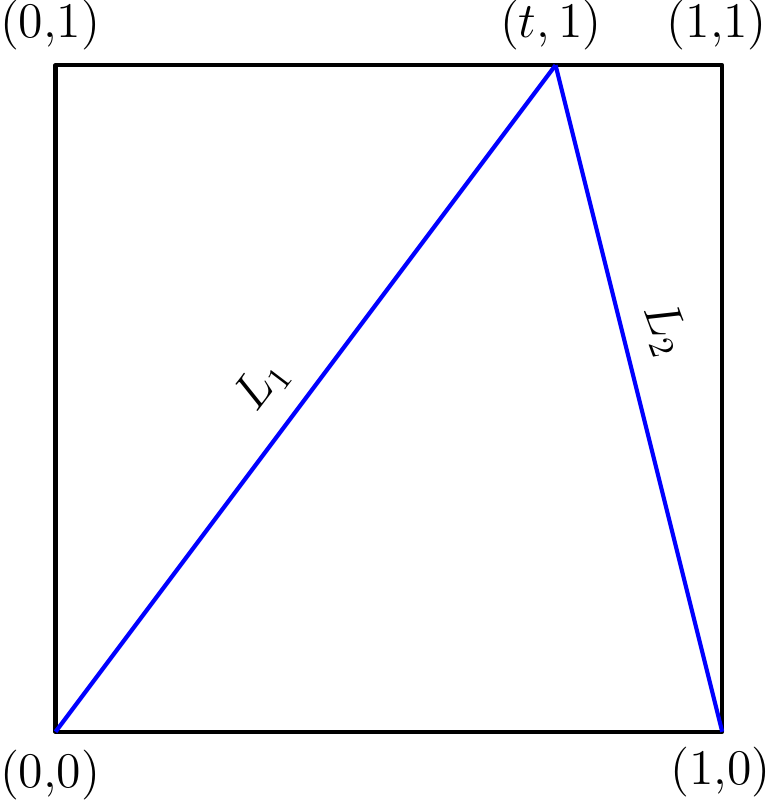}}\\
		\end{center}
		
		Let $\mathbb{U}(L_i)$ denote the uniform distribution on $L_i$. Then c.d.f. of $t\mathbb{U}(L_1)+(1-t)\mathbb{U}(L_2)$ is a copula on $L_1\cup L_2$ and hence by previous corollary it is an extreme copula. Thus for every distinct $t\in(0,1)$ we get a distinct extreme copula. And hence there are uncountably many extreme copulas. 
	\end{proof}
	
	\hspace*{3mm}We can see the above theorem talks about a small class of copulas. For example it can not  say whether the following 2-dimensional curve can be a support of an extreme copula or not.
	\begin{center}
		\scalebox{.35}{\includegraphics{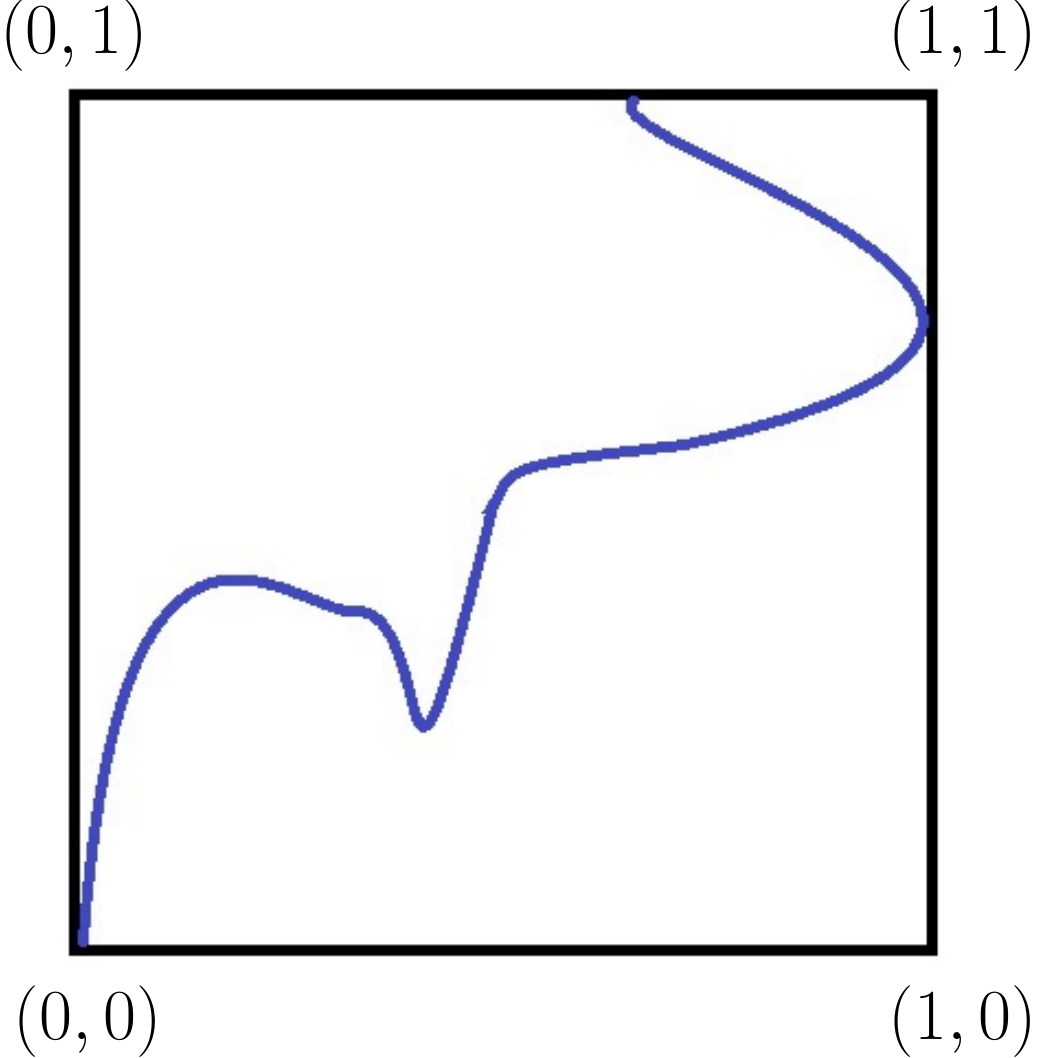}}
	\end{center}
	Therefore we need to have a more general theorem.\\
	\begin{thm}
		Let $D$ be a support of a copula such that there exist $B_1, B_2,\ldots, B_n\in \mathcal{B}(\left[0,1\right]) $ and  Borel measurable functions $f_i : \left[0,1\right]\mapsto\left[0,1\right]^{n-1}$ for $i\in \{1,2,\ldots,n\}$ such that  $\bigcup_{i=1}^n\pi_i^{-1}(B_i) \supseteq D  $ and $\pi_i^{-1}(B_i)\cap D=\pi_i^{-1}(B_i)\cap \mathcal{G}_{f_i}^{(i)}$. Then the copula will be an extreme copula. In particular it will be the unique copula supported on $D$.
	\end{thm}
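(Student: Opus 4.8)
The plan is to prove the stronger ``in particular'' claim first — that there is a \emph{unique} copula supported on $D$ — and then to deduce extremality exactly as in the corollary to \hyperlink{theo.4.1}{Theorem 4.1}. So let $P$ be any copula supported on $D$; the goal is to show that $P(A)$ is forced, for every Borel set $A\subseteq[0,1]^n$, by the hypotheses alone, independently of $P$.

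First I would analyze $P$ on each slab $\pi_i^{-1}(B_i)$ separately. Restricting $P$ to this slab, the hypothesis $\pi_i^{-1}(B_i)\cap D=\pi_i^{-1}(B_i)\cap\mathcal{G}_{f_i}^{(i)}$ shows that the restricted measure is supported on $\mathcal{G}_{f_i}^{(i)}$, while its $i$th marginal is $\lambda$ restricted to $B_i$: indeed, for Borel $E\subseteq[0,1]$ one has $P\big(\pi_i^{-1}(E)\cap\pi_i^{-1}(B_i)\big)=P\big(\pi_i^{-1}(E\cap B_i)\big)=\lambda(E\cap B_i)$, since the $i$th marginal of every copula is $\lambda$. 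Applying the permutation that carries the $i$th coordinate into the first (the free-coordinate relabelling noted just after the corollary turns $\mathcal{G}_{f_i}^{(i)}$ into an ordinary graph $\mathcal{G}$), \hyperlink{theo.4.1}{Theorem 4.1} applies: a measure supported on a graph is determined by its marginal in the free coordinate. Hence the restriction of $P$ to $\pi_i^{-1}(B_i)$ equals one fixed measure $Q_i$, the same for every copula supported on $D$. The only wrinkle is that \hyperlink{theo.4.1}{Theorem 4.1} is stated for probabilities while $\lambda$ restricted to $B_i$ has total mass $\lambda(B_i)\le 1$; this is handled by normalizing when $\lambda(B_i)>0$ (and setting $Q_i=0$ otherwise), or equivalently by noting that the uniqueness computation in the proof of \hyperlink{theo.4.1}{Theorem 4.1} uses only the support and the marginal and so applies verbatim to finite measures.

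Next I would stitch the slabs together. Because $\bigcup_{i=1}^n\pi_i^{-1}(B_i)\supseteq D$ and $P$ is supported on $D$, the sets $D_i:=\big(D\cap\pi_i^{-1}(B_i)\big)\setminus\bigcup_{j<i}\pi_j^{-1}(B_j)$ form a fixed measurable partition of $D$, depending only on $D,B_1,\ldots,B_n$ and not on $P$. For any Borel $A$, since $D_i\subseteq\pi_i^{-1}(B_i)$ and $Q_i(\cdot)=P(\cdot\cap\pi_i^{-1}(B_i))$, we get $P(A\cap D_i)=Q_i(A\cap D_i)$, so that
\[
P(A)=\sum_{i=1}^n P(A\cap D_i)=\sum_{i=1}^n Q_i(A\cap D_i),
\]
and the right-hand side depends only on the data $D,B_i,f_i$. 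Thus any two copulas supported on $D$ agree on every Borel set, which proves uniqueness. One could equally reconstruct $P$ from the $Q_i$ by inclusion--exclusion over the slabs; the disjointification simply avoids the bookkeeping.

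Finally, uniqueness yields extremality by the argument already used for the corollary: if $C$ is the copula supported on $D$ and $C=tC_1+(1-t)C_2$ with $t\in(0,1)$, then $C(D^c)=0$ forces $C_1(D^c)=C_2(D^c)=0$, so both $C_1$ and $C_2$ are supported on $D$ and therefore both equal $C$. I expect the main obstacle to be the stitching step — controlling the overlaps of the slabs $\pi_i^{-1}(B_i)$; the disjointification above is the clean way around it, and it relies crucially on the fact that each $Q_i$ is a genuinely determined measure agreeing with $P$ on \emph{all} of its slab, not merely on the graph $\mathcal{G}_{f_i}^{(i)}$.
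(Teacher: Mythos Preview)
Your proposal is correct and follows essentially the same route as the paper: show that on each slab $\pi_i^{-1}(B_i)$ the measure is forced (the paper does this by the one-line computation $P(S_i)=\lambda\bigl(\pi_i(S_i\cap\mathcal{G}_{f_i}^{(i)})\bigr)$ rather than by invoking \hyperlink{theo.4.1}{Theorem 4.1} after a permutation, but that is precisely the uniqueness calculation you allude to), then disjointify over the slabs to conclude $P=\bar P$. The only cosmetic difference is that the paper disjointifies the test set $A$ into pieces $A_0\subseteq D^c$ and $A_i\subseteq\pi_i^{-1}(B_i)$, whereas you disjointify $D$ itself into the $D_i$; either version handles the overlap bookkeeping equally well.
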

	\begin{proof}
		We only need to show that there exists at most one copula supported on $D$. Let $C$, $\bar{C}$ be two copulas supported on $D$. Let $P$, $\bar{P}$ be the corresponding induced probabilities. Observe for any $S_i\in \mathcal{B} \left( \left[0,1\right]^n  \right) $, $S_i\subseteq\pi_i^{-1}(B_i)$
		\begin{equation*}
		\begin{split}
		P(S_i)= P(S_i\cap D)=P\left(S_i\cap \mathcal{G}_{f_i}^{(i)} \right)
		&= P\left(\pi_i^{-1}\left( \pi_i \left(S_i\cap \mathcal{G}_{f_i}^{(i)} \right) \right) \right)\\
		&= \lambda \left( \pi_i \left(S_i\cap \mathcal{G}_{f_i}^{(i)}\right) \right)
		\end{split}
		\end{equation*}
		Similarly
		\begin{equation*}
		\begin{split}
		\bar{P}(S_i)= \bar{P}(S_i\cap D)=\bar{P}\left(S_i\cap \mathcal{G}_{f_i}^{(i)} \right)
		&= \bar{P}\left(\pi_i^{-1}\left( \pi_i \left(S_i\cap \mathcal{G}_{f_i}^{(i)} \right) \right) \right)\\
		&= \lambda \left( \pi_i \left(S_i\cap \mathcal{G}_{f_i}^{(i)} \right) \right)
		\end{split}
		\end{equation*}
		Therefore for all $i$, ${P}(S_i)=\bar{P}(S_i)$  whenever $S_i\in \mathcal{B} \left( \left[0,1\right]^n  \right) $ and $S_i\subseteq\pi_i^{-1}(B_i)$. Now for any $A\in \mathcal{B} \left( \left[0,1\right]^n  \right) $ there exist Borel sets $A_0,A_1,\ldots,A_n$ disjoint such that  $A=\bigcup_{i=0}^n A_i$ where $A_0\subseteq D^C$ and for $i \geq 1$, $A_i\subseteq\pi_i^{-1}(B_i)$. So we have
		\[\bar{P}(A)=\sum\limits_{i=0}^n \bar{P}(A_i)=\sum\limits_{i=0}^n {P}(A_i) = P(A) \] Which implies $P=\bar{P}$ and hence $C=\bar{C}$. So, we have proved that there exists at most one copula supported on $D$ and hence it is an extreme copula.
	\end{proof}
	\hspace*{3mm} This theorem overcomes some of the drawbacks of the previous theorem. For example it can say that if there exists a copula supported on  the following 2-dimensional curve then it has to be an extreme copula.\\
	\begin{center}
		\includegraphics[scale=.5]{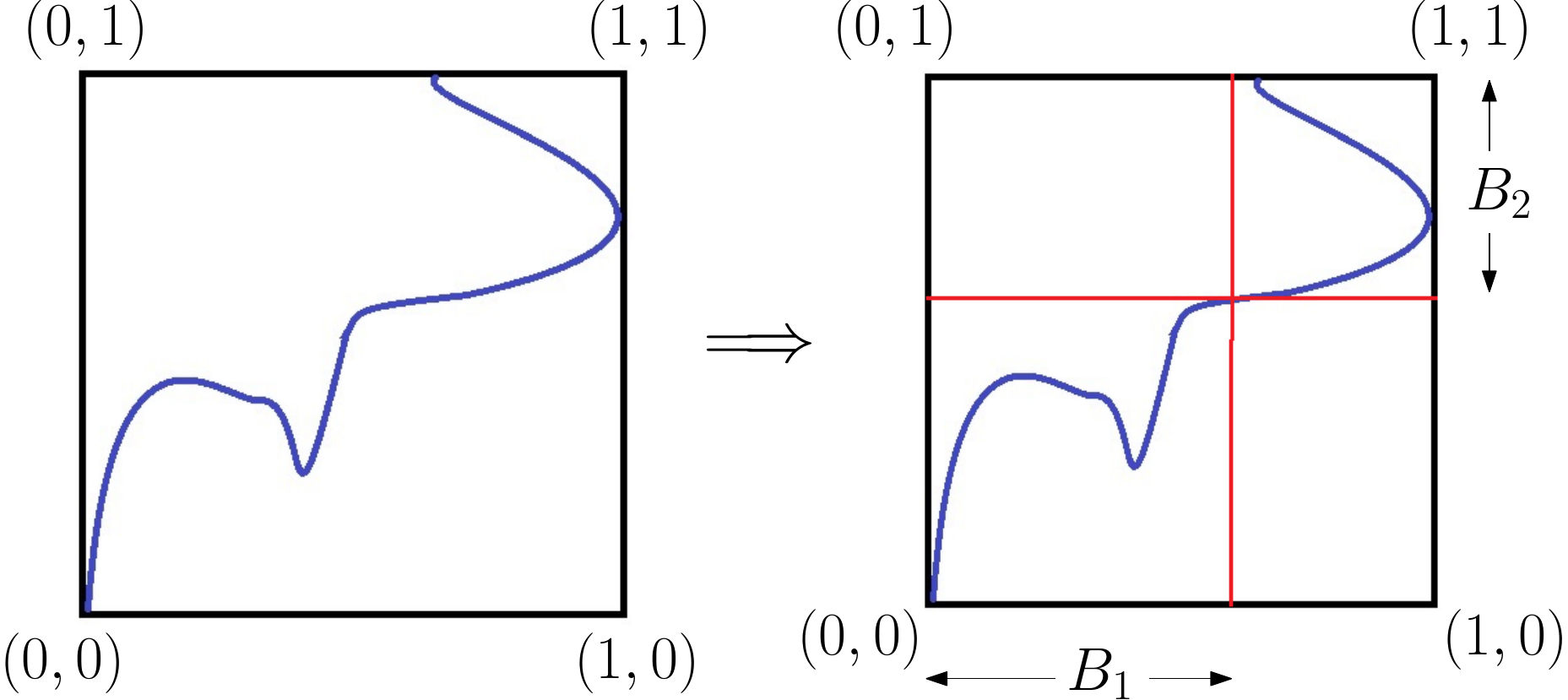}
	\end{center}  
	Now we will give some examples of extreme copulas.\\
	
	\begin{exmp}
		Let $0=t_1<t_2<\cdots<t_m<t_{m+1}=1$.
		Let $L_i$ be any interior diagonal of the $n$-dimensional rectangle $R_{v_1(t_i),u_1(t_{i+1})}$. See the figure for $n=2$.\\
		\begin{center}
			\scalebox{.5}{\includegraphics{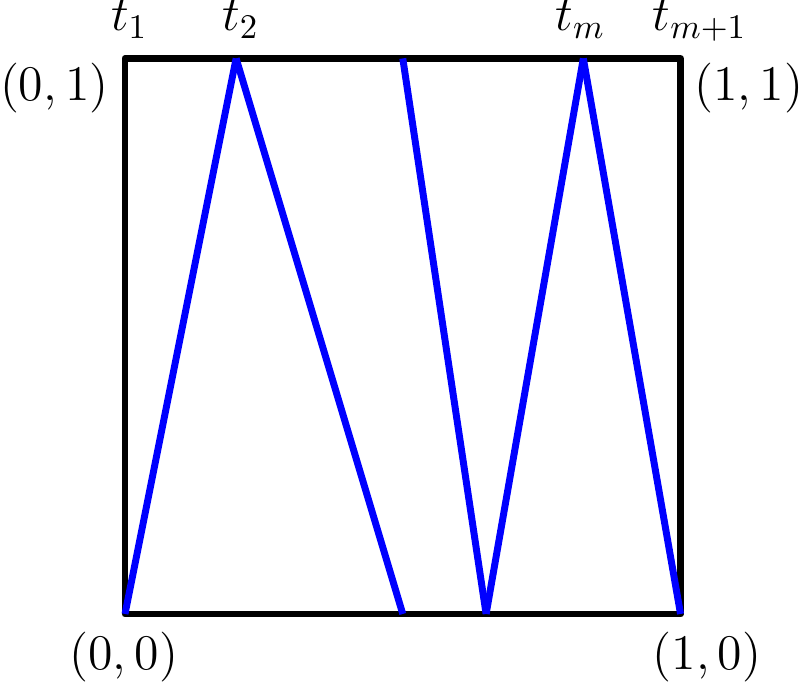}}
		\end{center}
		
		Then c.d.f. of the  probability measure $P=\sum_{i=1}^{m}(t_{i+1}-t_i)\mathbb{U}(L_i)$ is an extreme copula. To see this let
		\[
		f(x)=\sum\limits_{i=1}^m L_i(x)\cdot 1_{\left\{x\in \left[ t_i,t_{i+1} \right) \right\}}
		\]		
		Observe
		\[
		\left( \bigcup\limits_{i=1}^m L_i \right)\Delta\mathcal{G}_f \subseteq \left\{ \left( t_{i+1}, L_i\left(t_{i+1}\right) \right) \mid i \in \mathbb{N} \right\}
		\]
		But $P\left(\left\{ \left( t_{i+1}, L_i\left(t_{i+1}\right) \right) \mid i \in \mathbb{N} \right\}\right)=0$. So $P$ is supported on $\mathcal{G}_f$, $f$ measurable. 
		Observe $P_1=\sum_{i=1}^{m} (t_{i+1}-t_i)\mathbb{U}\left[t_i,t_{i+1}\right]=\mathbb{U}\left[0,1\right]$ and for $j\neq 1$ we have $P_j=\sum_{i=1}^{m} (t_{i+1}-t_i)\mathbb{U}\left[0,1\right]=\mathbb{U}\left[0,1\right]$. Therefore c.d.f. of $P$ is a copula and hence is an extreme copula.\\
	\end{exmp}
	
	\begin{exmp}
		Let $\{t_m\}$ be any non-decreasing sequence of real numbers in $\left[0,1\right]$ with $t_1=0$ and $t_0=\lim_{m\rightarrow\infty} t_m$.  Let $L_i$ be any interior diagonal of the $n$-dimensional rectangle $R_{v_1(t_i),u_1(t_{i+1})}$ and $L_0$ be any interior diagonal of the $n$-dimensional rectangle $R_{v_1(t_0),\utilde{1}}$. Then by similar approach one can get that the c.d.f. of the  probability measure $P=(1-t_0)\mathbb{U}(L_0)+\sum_{i=1}^{\infty}(t_{i+1}-t_i)\mathbb{U}(L_i)$ is an extreme copula.\\
	\end{exmp}
	
	\begin{exmp}[\textbf{Permutation copula of order} $\mathbf{m}$]
		Draw grids of size $\frac{1}{m}$ in $\left[0,1\right]^n$. Let $\sigma_k$ for $k=2,3,\ldots,n$ are permutations on $\{0,1,\ldots,m-1\}$. Define $\sigma_1$ as the identity permutation.\\

		Let $L_i$ be an interior diagonal of $S_{(\frac{\sigma_1(i)}{m},\frac{\sigma_2(i)}{m},\ldots,\frac{\sigma_n(i)}{m}),\frac{1}{m} }$ for all $i=0,1,\ldots,m-1$. See the figure for $n=2$.\\
		\begin{center}
			\scalebox{.5}{\includegraphics{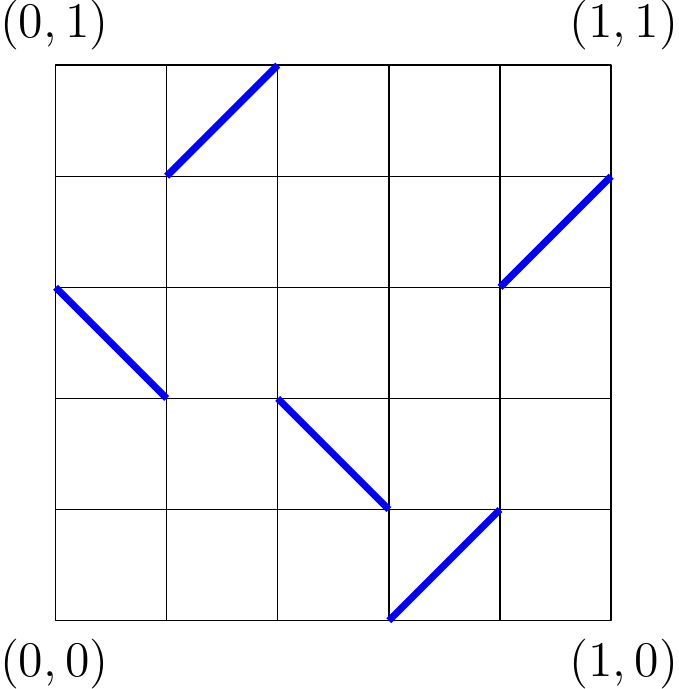}}
		\end{center}
		Let us define a probability $P$  as
		\[
		P=\sum_{i=0}^{m-1} \frac{1}{m}\cdot\mathbb{U}(L_i)
		\]
		Observe for all $k$, the $k$th marginal of $S_{(\frac{\sigma_1(i)}{m},\frac{\sigma_2(i)}{m},\ldots,\frac{\sigma_n(i)}{m}),\frac{1}{m} }$ with respect to $P$ is\linebreak $\frac{1}{m}\cdot\mathbb{U}\left(\left[\frac{\sigma_k(i)}{m},\frac{\sigma_{k}(i)+1}{m} \right]\right)$. Therefore for all $k$
		\[
		P_k= \sum_{i=0}^{m-1} \frac{1}{m}\cdot\mathbb{U}\left(\left[\frac{\sigma_k(i)}{m},\frac{\sigma_{k}(i)+1}{m} \right]\right)=\mathbb{U}\left(\left[0,1\right]\right)
		\]
		Hence c.d.f. of $P$ is a copula. Such a copula  will be called permutation copula. Let
		\[
		f(x)=\sum\limits_{i=0}^{m-1} L_i(x)\cdot 1_{\left\{x\in \left[ \frac{i}{m},\frac{i+1}{m} \right) \right\}}
		\]		
		Clearly $f$ is measurable. Observe
		\[
		P\left[ \left( \bigcup\limits_{i=0}^{m-1} L_i \right)\Delta\mathcal{G}_f  \right]\leq P\left[ \left\{ \left. \left( \frac{i+1}{m}, L_i\left(\frac{i+1}{m}\right) \right)\,\,\,\right| \,\,\, i \in \{0,1,\ldots,m-1\} \right\}  \right]=0
		\] 
		Therefore $P$ is supported on $\mathcal{G}_f$ and hence c.d.f. of $P$ is an extreme copula.\\
	\end{exmp}
	
	\begin{exmp}
		Let $C$ be a copula and $P$ be the corresponding induced probability For any $\utilde{\alpha}\in \mathbb{R}^n$ for any $A\subseteq \left[0,1\right]^n$ define
		\[
		A+\utilde{\alpha}=\left\{ \left( \{x_1+\alpha_1\},\{x_2+\alpha_2\},\ldots,\{x_n+\alpha_n\} \right) \mid \utilde{x}\in A  \right\}
		\]
		where $\{.\}$ denotes the fractional part. See the figure for $n=2$.\\
		\begin{center}
			\includegraphics[scale=1]{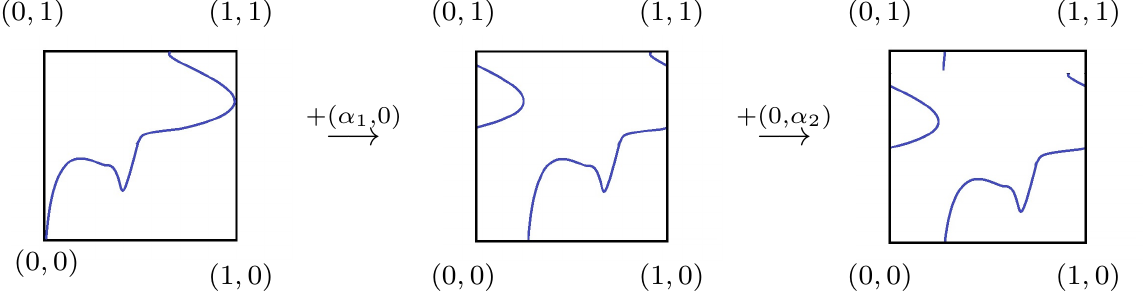}
		\end{center} \vspace*{.5cm}
		Define $P_{\utilde{\alpha}}$ as $P_{\utilde{\alpha}}(B)=P(B+\utilde{\alpha})$ for  all $B\in \mathcal{B}\left(\left[0,1\right]^n\right)$. Clearly $C_{\utilde{\alpha}}$, the c.d.f. of $P_{\utilde{\alpha}}$ is a copula. If $C$ is not an extreme copula i.e. $\exists$ copulas $C_1\neq C_2 \neq C$ such that $C=\frac{1}{2}\left(C_1+ C_2\right)$. This implies $C_{\utilde{\alpha}}=\frac{1}{2}\left((C_1)_{\utilde{\alpha}}+ (C_2)_{\utilde{\alpha}}\right)$. Hence $C_{\utilde{\alpha}}$ is not an extreme copula. Again, as $C=\left(C_{\utilde{\alpha}}\right)_{-\utilde{\alpha}}$. In a similar vein we can say $C_{\utilde{\alpha}}$ is not an extreme copula implies $C$ is not an extreme copula. Hence $C$ is  an extreme copula iff $C_{\utilde{\alpha}}$ is  an extreme copula.\\
	\end{exmp}
	\begin{exmp}
		Let $C$ be a copula and $P$ be the induced probability. Fix a co-ordinate $i$ and two disjoint intervals $\left[a,a+\delta\right],\,\left[b,b+\delta\right]\subseteq\left[0,1\right]$. Define a transformation $T$ as
		\[
		T(\utilde{x})=\left\{ 
		\begin{array}{ll}
		\left(x_1,\ldots,x_{i-1},x_i-a+b,x_{i+1},\ldots,x_n\right)& \mbox{, if } \utilde{x}\in \pi_i^{-1}\left(\left[a,a+\delta\right] \right)\\
		\left(x_1,\ldots,x_{i-1},x_i-b+a,x_{i+1},\ldots,x_n\right)& \mbox{, if } \utilde{x}\in \pi_i^{-1}\left(\left[b,b+\delta\right] \right)\\
		\utilde{x}&\mbox{, otherwise}
		\end{array}
		\right.
		\]
		\vspace*{.5cm}
		\begin{center}
			\includegraphics[scale=1]{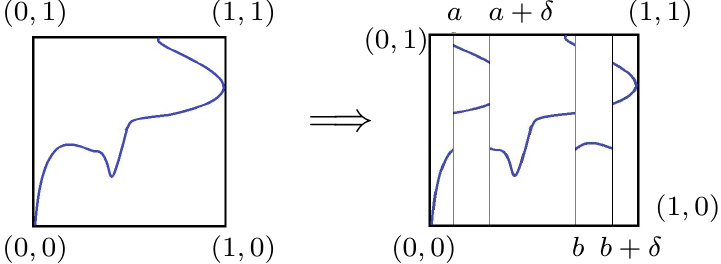}
		\end{center}
		Let $P'$ be the probability induced by $T$. As $T=T^{-1}$ we can say $P'=P\circ T$. Let the  $C'$ be c.d.f. of $P'$. Clearly $C'$ is a copula and it can be easily shown  that $C$ is an extreme copula iff $C'$ is an extreme copula.\\
	\end{exmp}
	\hspace*{3mm}Starting from the copula supported on any interior diagonal of $\left[0,1\right]^n$ if one applies the transformations mentioned in the above examples, he will always get a copula supported on a graph of a Borel measurable function. Now we are going to show that if a copula is supported on  graph of a Borel measurable function then the function has to be measure preserving in each co-ordinate with respect to Lebesgue measure.
	Conversely for any Borel measurable function  from $\left[0,1\right]$ to $\left[0,1\right]^{n-1}$, there exists a copula supported on its graph if it is measure preserving in each co-ordinate with respect to Lebesgue measure.\\
	
	\begin{thm}
		For any Borel measurable function  $f : \left[0,1\right]\mapsto\left[0,1\right]^{n-1}$, $\exists$ a copula supported on $\mathcal{G}_f$ iff $f$ is measure preserving in each co-ordinate with respect to $\lambda$. In that case the copula will be an extreme copula
	\end{thm}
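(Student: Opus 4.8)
The plan is to reduce the entire statement to \hyperlink{theo.4.1}{Theorem 4.1} together with its first Corollary, so that the only genuine content is the translation of the copula marginal conditions into the measure-preserving property. Write $f=(g_1,\ldots,g_{n-1})$, so that a point of $\mathcal{G}_f$ has the form $(x,g_1(x),\ldots,g_{n-1}(x))$; in particular the $j$-th coordinate of the graph (for $j\geq 2$) is $g_{j-1}(x)$. By ``measure preserving in each co-ordinate'' I take the statement that $\lambda(g_i^{-1}(B))=\lambda(B)$ for every $i\in\{1,\ldots,n-1\}$ and every $B\in\mathcal{B}([0,1])$, which is precisely the condition that each pushforward $\lambda\circ g_i^{-1}$ equals $\lambda$.

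For the forward implication, suppose a copula $C$ is supported on $\mathcal{G}_f$ and let $P$ be its induced probability. Since every marginal of a copula is $\mathbb{U}[0,1]$, the first marginal of $P$ is $\lambda$; hence by the uniqueness part of \hyperlink{theo.4.1}{Theorem 4.1}, applied with $\mu=\lambda$, the measure $P$ must be the one given by $P(S_1\times S_2)=\lambda(S_1\cap f^{-1}(S_2))$. I would then compute the $j$-th marginal for $j\geq 2$ by writing the cylinder $\pi_j^{-1}(B)$ in the semi-field form $S_1\times S_2$ with $S_1=[0,1]$ and $S_2$ equal to $B$ in the $(j-1)$-th slot and $[0,1]$ in the remaining slots, which yields $P_j(B)=\lambda(g_{j-1}^{-1}(B))$. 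The copula requirement $P_j=\lambda$ then reads $\lambda(g_{j-1}^{-1}(B))=\lambda(B)$ for all $B$, i.e.\ $g_{j-1}$ is measure preserving. Letting $j$ range over $2,\ldots,n$ gives the conclusion for every coordinate of $f$.

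For the converse, assume each $g_i$ is measure preserving. Apply \hyperlink{theo.4.1}{Theorem 4.1} with $\mu=\lambda$ to obtain the unique probability $P$ supported on $\mathcal{G}_f$ whose first marginal is $\lambda$. Its first marginal is $\lambda=\mathbb{U}[0,1]$ by construction, and the same cylinder computation as above gives, for $j\geq 2$, $P_j(B)=\lambda(g_{j-1}^{-1}(B))=\lambda(B)$, so every marginal of $P$ is uniform. Therefore the c.d.f.\ of $P$ is a copula supported on $\mathcal{G}_f$, establishing existence.

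Finally, the extremality assertion is immediate in either direction: we have produced a copula supported on the graph $\mathcal{G}_f$ of a Borel measurable function, and the Corollary to \hyperlink{theo.4.1}{Theorem 4.1} states that any such copula is extreme. I do not expect a real obstacle here, since the machinery does all the work; the only point requiring care is the indexing bookkeeping, namely identifying the $j$-th coordinate of the graph with $g_{j-1}$ and rewriting the single-coordinate cylinder $\pi_j^{-1}(B)$ in the semi-field form so that the formula of \hyperlink{theo.4.1}{Theorem 4.1} applies verbatim. Once that identification is fixed, both directions are direct verifications.
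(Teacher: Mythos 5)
Your proposal is correct and takes essentially the same route as the paper: both directions hinge on the identity $\pi_j^{-1}(B)\cap\mathcal{G}_f=\pi_1^{-1}\left(g_{j-1}^{-1}(B)\right)\cap\mathcal{G}_f$ to convert marginal conditions into measure preservation, and extremality is delegated to the Corollary of \hyperlink{theo.4.1}{Theorem 4.1}. The only (cosmetic) difference is that you route the construction of $P$ through Theorem 4.1 explicitly, whereas the paper rebuilds the same measure inline as $P(B)=\lambda\left[\pi_1(B\cap\mathcal{G}_f)\right]$ and verifies the marginals directly.
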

	\begin{proof}
		Let $f : \left[0,1\right]\mapsto\left[0,1\right]^{n-1}$ be a Borel measurable function. Suppose there is a copula $C$  supported on $\mathcal{G}_f$. Let $P$ be the probability induced by $C$. write $f$ as $(f_2,f_3,\ldots,f_n)$. Then for all $i$, for all $B\in\mathcal{B}(\left[0,1\right])$
		\[
		\begin{split}
			\lambda\left[f_i^{-1}(B)\right] &= P\left[\pi_1^{-1}\left(f_i^{-1}(B)\right)\right]\\
			&= P\left[\pi_1^{-1}\left(f_i^{-1}(B)\right)\cap \mathcal{G}_f \right]\\
			&= P\left[ \left\{  (x,f(x)) \mid x\in f_i^{-1}(B)   \right\} \right]\\
			&= P\left[ \left\{  (x,f(x)) \mid f_i(x)\in B   \right\} \right]\\
			&= P\left[\pi_i^{-1}(B)\cap \mathcal{G}_f \right]\\
			&= P\left[ \pi_i^{-1}(B) \right]\\
			&= \lambda\left[B\right] 
		\end{split}
		\]
		Hence $f$ is measure preserving in each co-ordinate with respect to $\lambda$.\\
		\begin{center}
			\includegraphics[scale=.4]{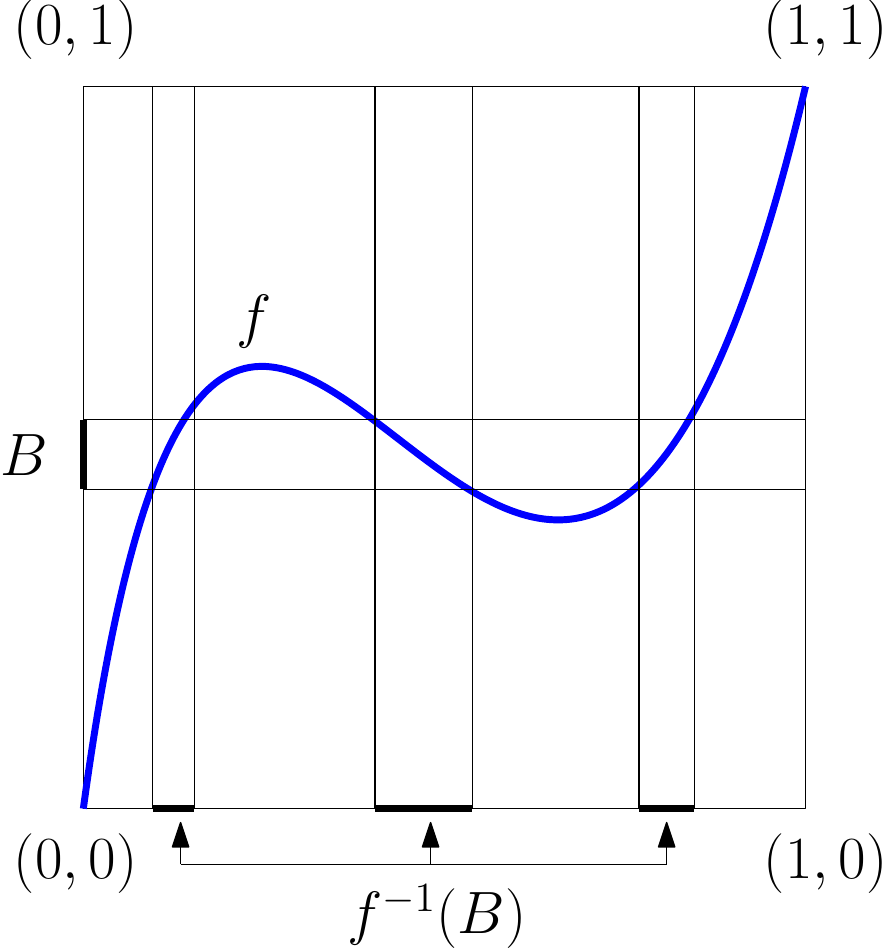}
		\end{center}
		\hspace*{3mm}Now suppose $f : \left[0,1\right]\mapsto\left[0,1\right]^{n-1}$ be a Borel measurable function which is measure preserving in each co-ordinate with respect to $\lambda$. Define a probability $P$ as \[P(B) = \lambda\left[\pi_1(B\cap\mathcal{G}_f)\right]\] for all $B\in\mathcal{B}(\left[0,1\right])$. Clearly $P$ is supported on $\mathcal{G}_f$. Let $P_i$ be its $i$th marginal. Observe
		\[
		\begin{split}
			P_1 (B)= P\left[\pi_1^{-1}(B)\right] &=P\left[\pi_1^{-1}(B)\cap \mathcal{G}_f  \right]\\
			&= \lambda \left[ \pi_1\left( \pi_1^{-1}(B)\cap \mathcal{G}_f  \right) \right]\\
			&= \lambda \left[B\right]
		\end{split}
		\]
		For all $i\neq 1$
		\[
		\begin{split}
			P_i (B)= P\left[\pi_i^{-1}(B)\right] &=P\left[\pi_i^{-1}(B)\cap \mathcal{G}_f  \right]\\
			&= P\left[\pi_1^{-1}\left(f_i^{-1}(B)\right)\cap \mathcal{G}_f \right]\\
			&= P\left[\pi_1^{-1}\left(f_i^{-1}(B)\right)\right]\\
			&= P_1\left[f_i^{-1}(B)\right]\\
			&= \lambda\left[f_i^{-1}(B)\right]\\
			&= \lambda \left[B\right]
		\end{split}
		\]
		Therefore c.d.f. of $P$ is a copula.\\
		
		\hspace*{3mm}Clearly as the copula is supported on a graph of a Borel measurable function it is an extreme copula.	
	\end{proof}
	
	\section{Necessary Conditions}
	\hspace*{3mm}A trivial necessary condition for a set $D\subseteq \left[0,1\right]^n$ to be the support of a copula is that for any $B\in \mathcal{B} \left( \left[0,1\right]^n  \right)$ satisfying $B\cap D= \pi_i^{-1}\left(\pi_i(B)\right)\cap D=\pi_j^{-1}\left(\pi_j(B)\right)\cap D $ we must have $\lambda\left(\pi_i(B)\right)=\lambda\left(\pi_j(B)\right)$.
	Here using Lebesgue decomposition theorem (see, e.g. \hyperlink{ref1}{[1]}) we are going to show that probability induced by any extreme copula has to be singular with respect to Lebesgue measure. To prove this we need to prove the following lemma.\\
	\begin{lem}
		\hypertarget{lemma.5.1}{}
		Let $F$ be a copula and $P_F$ be the induced probability.  Let the Lebesgue decomposition be \[ P_F (A)=\int_A f\,d\lambda^n\, +\mu(A)\]
		Where $f$ is non-negative, $A\in \mathcal{B}\left(\left[0,1\right]^n\right)$ and $\mu\perp\lambda^n$. Suppose there exists a closed square $S_{\utilde{a},\varepsilon}=\prod_{i=1}^{n}\left[a_i,a_i+\varepsilon\right]$ $ \subseteq \left[0,1\right]^n$  such that 
		\[\lambda^n\left[f^{-1}\left( \{0\}  \right)\cap S_{\utilde{a},\varepsilon} \right] <\frac{1}{4}\cdot\lambda^n\left[ S_{\utilde{a},\varepsilon}  \right] =\frac{\varepsilon^n}{4}  \]
		Then $F$ can not be an extreme copula.
	\end{lem}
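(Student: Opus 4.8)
The plan is to show $F$ is not extreme by producing a nonzero finite signed measure $\nu$, absolutely continuous with bounded density $h$, having total mass zero, all one‑dimensional marginals zero, and satisfying $|h|\leq f$ a.e. Granting such a $\nu$, the measures $P_F\pm\nu=(f\pm h)\,d\lambda^n+\mu$ are nonnegative (their absolutely continuous parts satisfy $f\pm h\geq f-|h|\geq0$ and $\mu\geq0$), have total mass $1$, and share the marginals of $P_F$ because those of $\nu$ vanish; hence their c.d.f.'s $F_1,F_2$ are copulas with $F=\tfrac12(F_1+F_2)$ and $F_1\neq F_2$ (since $\nu\neq0$), so $F$ is not extreme. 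Everything therefore reduces to constructing $h$.

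First I would fix the level of the density. Since $f\geq0$, the sets $\{f<\eta\}$ decrease to $\{f=0\}$ as $\eta\downarrow0$, so by continuity of measure and the hypothesis I can choose $\eta>0$ with $\lambda^n\!\left[\{f<\eta\}\cap S_{\utilde{a},\varepsilon}\right]<\tfrac14\varepsilon^n$. Next, split $S_{\utilde{a},\varepsilon}$ in the first two coordinates into four congruent sub‑boxes $Q_{LL},Q_{LR},Q_{RL},Q_{RR}$ (left/right halves in coordinates $1$ and $2$, full range in the remaining coordinates), each of measure $\varepsilon^n/4$. I then parametrize them simultaneously: let $\Theta=[0,\varepsilon/2]^2\times\prod_{i\geq3}[a_i,a_i+\varepsilon]$ be a common parameter box and, for $\theta\in\Theta$, let $p_{LL}(\theta),\dots,p_{RR}(\theta)$ be the four points obtained by placing $\theta$ in the respective sub‑box via the obvious translations. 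By construction these four points have equal coordinates from the third on and form an axis‑parallel rectangle in coordinates $1,2$, and each map $\theta\mapsto p_{\cdot\cdot}(\theta)$ is a measure‑preserving bijection of $\Theta$ onto its sub‑box.

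The key step, and where the constant $\tfrac14$ enters, is a union bound on this parametrization. Call $\theta$ good if $f\geq\eta$ at all four points $p_{\cdot\cdot}(\theta)$. Because the four maps are measure‑preserving, the set of bad $\theta$ (those with some corner in $\{f<\eta\}$) has measure at most $\sum_{\cdot\cdot}\lambda^n\!\left[\{f<\eta\}\cap Q_{\cdot\cdot}\right]=\lambda^n\!\left[\{f<\eta\}\cap S_{\utilde{a},\varepsilon}\right]<\tfrac14\varepsilon^n=\lambda(\Theta)$, so the good parameters form a set of positive measure. I then define $h$ to equal $+\eta$ on the images under $p_{LL}$ and $p_{RR}$ of the good parameters, $-\eta$ on the images under $p_{LR}$ and $p_{RL}$ of the good parameters, and $0$ elsewhere. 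Since $h\neq0$ only at good corners, where $f\geq\eta\geq|h|$, we get $|h|\leq f$ a.e., and $h\not\equiv0$ because the good set has positive measure.

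It remains to check that $h$ has vanishing marginals and total mass, and this is the last step. The point is that goodness is a property of the parameter $\theta$, shared by all four corners, so the indicator factors out of each cancellation. Integrating $h$ against the coordinates other than the first, the contributions of $p_{LL}$ and $p_{LR}$ (which share the first coordinate for a common $\theta$ and carry opposite signs with the same goodness indicator) cancel, and likewise $p_{RL},p_{RR}$; the pairing $p_{LL},p_{RL}$ and $p_{LR},p_{RR}$ kills the second marginal, while for each coordinate $i\geq3$ and for the total mass the four signs $+,-,-,+$ sum to zero. Hence $\nu=h\,d\lambda^n$ meets all requirements. The only real subtlety I anticipate is the bookkeeping of the parametrization so that the four corners are genuinely locked to a single $\theta$; this is exactly what forces the bad set to be controlled by $\lambda^n[\{f<\eta\}\cap S_{\utilde{a},\varepsilon}]$ rather than by four times as much, and once it is set up both the marginal cancellations and the sharp threshold $\tfrac14$ fall out directly.
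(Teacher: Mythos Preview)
Your proof is correct and follows essentially the same approach as the paper: split $S_{\utilde a,\varepsilon}$ into four congruent sub-boxes in the first two coordinates, place a perturbation with the sign pattern $+,-,-,+$ so that all one-dimensional marginals cancel, and use the $\tfrac14$ hypothesis via a union bound over the four translates to guarantee the perturbation is nonzero. The only cosmetic difference is that the paper takes the perturbation magnitude to be $g=\min(f_1,f_2,f_3,f_4)$ pointwise, whereas you first pass to a threshold $\eta$ and use $\eta\cdot\mathbf{1}_{\{\text{good}\}}$; both choices secure $|h|\le f$ and $h\not\equiv 0$ by the same measure estimate.
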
 
	\begin{proof}
		We will get a $g: \left[0,\frac{\varepsilon}{2}\right]\times\left[0,\frac{\varepsilon}{2}\right] \times \left[0,\varepsilon\right]^{n-2}\mapsto\mathbb{R}$ which is not 0 almost everywhere such that $h_1$ and $h_2$ defined below are non-negative. Define for $i=1,2$
		\[
		h_i(\utilde{x})=\left\{ \begin{array}{l}
		f(\utilde{x})  \mbox{\hspace*{1.7cm} if } \utilde{x}\in S_{\utilde{a},\varepsilon}^C\\[.35cm]
		
		f(\utilde{x}) +(-1)^ig(\utilde{x}-\utilde{a})\\
		\mbox{\hspace*{2.5cm} if }   \utilde{x}\in \left[a_1,a_1+\frac{\varepsilon}{2}\right]\times\left[a_2,a_2+\frac{\varepsilon}{2}\right]\times\prod\limits_{i=3}^n\left[a_i,a_i+\varepsilon\right]\\[.35cm]
		f(\utilde{x}) +(-1)^{i+1}g(\utilde{x}-(a_1+\frac{\varepsilon}{2},a_2,a_3,\ldots,a_n)) \\
		\mbox{\hspace*{2.5cm} if  }   \utilde{x}\in \left[a_1+\frac{\varepsilon}{2},a_1+\varepsilon\right]\times\left[a_2,a_2+\frac{\varepsilon}{2}\right]\times\prod\limits_{i=3}^n\left[a_i,a_i+\varepsilon\right]\\[.35cm]
		
		f(\utilde{x}) +(-1)^{i+1}g(\utilde{x}-(a_1,a_2+\frac{\varepsilon}{2},a_3,\ldots,a_n))\\
		\mbox{\hspace*{2.5cm} if }   \utilde{x}\in \left[a_1,a_1+\frac{\varepsilon}{2}\right]\times\left[a_2+\frac{\varepsilon}{2},a_2+\varepsilon\right]\times\prod\limits_{i=3}^n\left[a_i,a_i+\varepsilon\right]\\[.35cm]
		f(\utilde{x}) +(-1)^{i}g(\utilde{x}-(a_1+\frac{\varepsilon}{2},a_2+\frac{\varepsilon}{2},a_3,\ldots,a_n))\\
		\mbox{\hspace*{2.5cm} if }   \utilde{x}\in \left[a_1+\frac{\varepsilon}{2},a_1+\varepsilon\right]\times\left[a_2+\frac{\varepsilon}{2},a_2+\varepsilon\right]\times\prod\limits_{i=3}^n\left[a_i,a_i+\varepsilon\right]		
		\end{array}
		\right.
		\]
		
		Then we will define the copulas $H_1$ and $H_2$ such that 
		\[P_{H_1}(A)=\int_A h_1\,d\lambda^n\, +\mu(A) \mbox{\hspace{2cm}} P_{H_2}(A)=\int_A h_2\,d\lambda^n\, +\mu(A)\]
		where $P_{H_1}$ and $P_{H_2}$ are probabilities induced by $H_1$ and $H_2$ respectively. Clearly $H_1$ and $H_2$ are copulas. As $f=\frac{1}{2}(h_1+h_2)$, hence $P_F=\frac{1}{2}(P_{H_1}+P_{H_2})$.  Therefore $F=\frac{1}{2}(H_1+H_2)$.  As $g$ in not 0 almost everywhere $F\neq {H_1}\neq {H_2}$. Hence $F$ will not be
		an extreme copula. So it is enough to get $g$ which is not 0 almost everywhere such that both $h_1$ and $h_2$ are non-negative. For this we will define functions $f_1,\,f_2,\,f_3,\,f_4$  on $\left[0,\frac{\varepsilon}{2}\right]\times\left[0,\frac{\varepsilon}{2}\right] \times \left[0,\varepsilon\right]^{n-2}$ such that 
		\[
		\begin{split}
			f_1(\utilde{x})&= f(\utilde{x}+\utilde{a})\\
			f_2(\utilde{x})&= f\left(\utilde{x}+\left(a_1+\frac{\varepsilon}{2},a_2,a_3,\ldots,a_n\right)\right)\\
			f_3(\utilde{x})&= f\left(\utilde{x}+\left(a_1,a_2+\frac{\varepsilon}{2},a_3,\ldots,a_n\right)\right)\\
			f_4(\utilde{x})&= f\left(\utilde{x}+\left(a_1+\frac{\varepsilon}{2},a_2+\frac{\varepsilon}{2},a_3,\ldots,a_n\right)\right)
		\end{split}
		\]
		Take $g$ to be $\min\{f_1,\,f_2,\,f_3,\,f_4\}$. This ensures that both $h_1$ and $h_2$ are non-negative almost everywhere. Now we need to show $g$ is not 0 almost everywhere. Observe $g^{-1}\left(\{0\}\right)=\bigcup_{i=1}^4f_i^{-1}\left(\{0\}\right)$. 
		Therefore 

		\[
		\begin{split}
			\lambda^n\left(g^{-1}\left(\{0\}\right)\right)
			&= \lambda^n \left(\bigcup_{i=1}^4f_i^{-1}\left(\{0\}\right)\right)\\[.25cm]
			&\leq  \lambda^n\left(f^{-1}\left(\{0\}\right)\cap S_{\utilde{a},\varepsilon} \right)\\[.25cm]
			&< \frac{1}{4}\cdot\lambda^n\left( S_{\utilde{a},\varepsilon}  \right) \mbox{\hspace{1cm}(by assumption)}\\[.25cm]
			&= \frac{\varepsilon^n}{4}\\[.25cm]
			&= \lambda^n\left(\left[0,\frac{\varepsilon}{2}\right]\times\left[0,\frac{\varepsilon}{2}\right] \times \left[0,\varepsilon\right]^{n-2}\right)\\[.25cm]
		\end{split}
		\]
		Therefore $g$ is not $0$ almost everywhere. Hence $F$ can not be an extreme copula.
	\end{proof}
	\hspace{.25cm}
	\begin{thm}
		Probability induced by any extreme copula has to be singular with respect to Lebesgue measure.
	\end{thm}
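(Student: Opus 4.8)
The plan is to prove the contrapositive via the preceding lemma (Lemma 5.1). Assume $F$ is a copula whose induced probability $P_F$ is \emph{not} singular with respect to $\lambda^n$, and write its Lebesgue decomposition $P_F(A)=\int_A f\,d\lambda^n+\mu(A)$ with $f\geq 0$ and $\mu\perp\lambda^n$. Non-singularity means the absolutely continuous part is non-trivial, so $\int_{\left[0,1\right]^n} f\,d\lambda^n>0$, and hence the set $E=\{\utilde{x}:f(\utilde{x})>0\}$ has $\lambda^n(E)>0$. The goal is to manufacture a closed square $S_{\utilde{a},\varepsilon}\subseteq\left[0,1\right]^n$ on which the zero set of $f$ occupies less than a quarter of the volume; this is exactly the hypothesis of Lemma 5.1 and therefore forces $F$ to be non-extreme.

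The key tool is the Lebesgue density theorem. First I would invoke it to conclude that $\lambda^n$-almost every point of $E$ is a density point, i.e. the fraction of $E$ inside small cubes centred there tends to $1$. Since $\lambda^n(E)>0$ while the boundary of $\left[0,1\right]^n$ is $\lambda^n$-null, the set of density points of $E$ has positive measure and in particular contains a point $\utilde{x}_0$ in the interior of $\left[0,1\right]^n$.

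Next, for this $\utilde{x}_0$ I would consider the centred closed cubes $S_{\utilde{a},\varepsilon}$ with $a_i=(x_0)_i-\tfrac{\varepsilon}{2}$; for all sufficiently small $\varepsilon>0$ these lie inside $\left[0,1\right]^n$, and by the density property $\frac{\lambda^n(E\cap S_{\utilde{a},\varepsilon})}{\varepsilon^n}\to 1$. Fixing $\varepsilon$ small enough that this ratio exceeds $\tfrac34$ gives
\[
\lambda^n\left(f^{-1}(\{0\})\cap S_{\utilde{a},\varepsilon}\right)=\varepsilon^n-\lambda^n\left(E\cap S_{\utilde{a},\varepsilon}\right)<\tfrac14\,\varepsilon^n,
\]
which is precisely the inequality required by Lemma 5.1. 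That lemma then yields that $F$ is not an extreme copula, completing the contrapositive.

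The routine steps (Lebesgue decomposition, and translating ``non-singular'' into $\lambda^n(E)>0$) are standard; the one place to be careful is the application of the density theorem. I expect the main subtlety to be justifying that the density converges along the axis-aligned cubes $S_{\utilde{a},\varepsilon}$ rather than along balls, and ensuring the chosen density point can be taken in the interior so that the whole cube fits inside $\left[0,1\right]^n$. Both points are handled by working with cubes centred at an interior density point and using that the boundary of $\left[0,1\right]^n$ is $\lambda^n$-null.
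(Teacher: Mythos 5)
Your proposal is correct, and it reaches the paper's Lemma 5.1 by a genuinely different route for the key analytic step. Both you and the paper argue the contrapositive and reduce everything to producing one closed cube $S_{\utilde{a},\varepsilon}\subseteq\left[0,1\right]^n$ on which $\lambda^n\bigl(f^{-1}(\{0\})\cap S_{\utilde{a},\varepsilon}\bigr)<\tfrac14\varepsilon^n$. The paper gets such a cube without the density theorem: it first finds $0<a<b$ with $\lambda^n\bigl(f^{-1}([a,b])\bigr)=\alpha>0$, applies Lusin's theorem to extract a closed positive-measure subset $K_0$ of $f^{-1}([a,b])$ (which is disjoint from $f^{-1}(\{0\})$), covers $K_0$ by grid cubes of side $\tfrac1m$, notes that the union of these cubes decreases to $K_0$ so the zero set occupies a vanishing fraction of the union as $m\to\infty$, and then uses an averaging/pigeonhole step to select a single good cube. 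Your argument instead applies the Lebesgue density theorem directly to $E=\{f>0\}$, using that $f^{-1}(\{0\})$ is the complement of $E$ in $\left[0,1\right]^n$, and picks a small cube centred at an interior density point; this is shorter and avoids both Lusin's theorem and the covering argument. The one subtlety you flag --- that density must be taken along axis-aligned cubes rather than balls --- is real but standard, since centred cubes shrink nicely to the point, and your handling of the interior/boundary issue is fine. Your approach buys brevity and transparency; the paper's buys independence from the differentiation theory of measures, at the cost of an extra compactness and covering layer.
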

	\begin{proof}
		Let $C$ be a copula and $P$ be the probability induced by $C$. Suppose $P$ is not singular with respect to $\lambda^n$. Then there exists non-negative Borel measurable $f$ which is not 0 almost everywhere and a measure $\mu\perp\lambda^n$ such that the Lebesgue decomposition of $P$ is given by:
		\[
		P(A)=\int_A f\,d\lambda^n\, +\mu(A)   
		\]
		So, there exists $ a,b $ with $0<a<b$ such that 
		\[
		\alpha=\lambda^n\left(f^{-1}\left([a,b]\right)\right)>0
		\]
		Fix $\varepsilon\in (0,\alpha)$. 
		
		By Lusin's theorem, there exists a compact set $K$ with $\lambda^n(K)>1-\varepsilon$ such that $f$ restricted to $K$ is continuous. Therefore the set 
		$ f^{-1}\left([a,b]\right)\cap K $ is closed. Denote it by $K_0$. So, we have $\lambda^n\left(K_0\right) \geq\alpha-\varepsilon>0$ \\
		
		Now draw grids of size $\frac{1}{m}$. Let $S_{\utilde{c_1},\frac{1}{m}},\, S_{\utilde{c_2},\frac{1}{m}},\,\ldots,\, S_{\utilde{c_r},\frac{1}{m}}$ be the minimum possible closed squares in the grid required to cover $K_0$.\\
		
		Define $A_\xi=\left\{ \utilde{x} \mid d( \utilde{x},A   )\leq \xi  \right\}$ where $d$ is the Euclidean distance. Then it can be easily shown that ${K_0}_{\frac{\sqrt{n}}{m}}\supseteq \bigcup_{p=1}^r S_{\utilde{c_p},\frac{1}{m}}$. Observe as $m\rightarrow \infty$, ${K_0}_{\frac{\sqrt{n}}{m}}\downarrow K_0$ as $K_0$ is closed. Therefore
		
		\[\lim\limits_{m\rightarrow\infty}\lambda^n\left({K_0}_{\frac{\sqrt{n}}{m}}\cap f^{-1}\left\{0\right\}\right)=\lambda^n\left(  K_0\cap f^{-1}\left\{0\right\}\right)\Rightarrow \lim\limits_{m\rightarrow\infty}\lambda^n\left(\bigcup_{p=1}^r S_{\utilde{c_p},\frac{1}{m}}\cap f^{-1}\left\{0\right\}\right)=0
		\]
		So we can choose $m$ large enough such that
		
		\[
		\begin{split}
			\lambda^n\left[f^{-1}\left( \{0\}  \right)\cap \left( \bigcup_{p=1}^r S_{\utilde{c_p},\frac{1}{m}} \right) \right]
			&< \frac{\alpha-\varepsilon}{4}\\[.25cm]
			&\leq \frac{1}{4}\cdot \lambda^n \left(K_0\right)\\[.25cm]
			&\leq \frac{1}{4}\cdot \lambda^n\left( \bigcup_{p=1}^r S_{\utilde{c_p},\frac{1}{m}}  \right)
	    \end{split}
	    \]  
		
		So, $\exists\, q\in\left\{1,2,\ldots,r\right\}$	such that
		\[
		\lambda^n\left[f^{-1}\left( \{0\}  \right)\cap S_{\utilde{c_q},\varepsilon} \right] \geq\frac{1}{4}\cdot\lambda^n\left[ S_{\utilde{c_q},\varepsilon}  \right]
		\]   
		
		And hence by \hyperlink{lemma.5.1}{Lemma 5.1}, $C$ can not be an extreme copula.
	\end{proof}	\vspace{.5cm}
	
	\hspace*{3mm} In the following 3-dimensional figure denote $L_1$ to be the line joining $ (0,0,\frac{1}{2}) $ and $ (\frac{1}{2},\frac{1}{2},1) $, $L_2$ to be the line joining $ (\frac{1}{2},\frac{1}{2},\frac{1}{2}) $ and $ (1,1,1) $, $L_3$ to be the line joining $ (0,\frac{1}{2},0) $ and $ (\frac{1}{2},1,\frac{1}{2}) $ and $L_4$ to be the line joining $ (\frac{1}{2},0,0) $ and $ (1,\frac{1}{2},\frac{1}{2}) $. Define a probability $P$ as $P=\sum_{i=1}^{4}\mathbb{U}(L_i)$. It can be easily shown that c.d.f. of $P$ is an extreme copula. Clearly it satisfies the necessary condition but there does not exist any $i\in\{1,2,3\}$, $B\in \mathcal{B}(\left[0,1\right]) $ and  Borel measurable functions $f : \left[0,1\right]\mapsto\left[0,1\right]^2$  such that $\pi_i^{-1}(B)\cap(\cup_{i=1}^4 L_i)=\pi_i^{-1}(B)\cap \mathcal{G}_f^{(i)}$. Hence it does not satisfy the sufficient condition.
	\begin{center}
		\includegraphics[scale=.8]{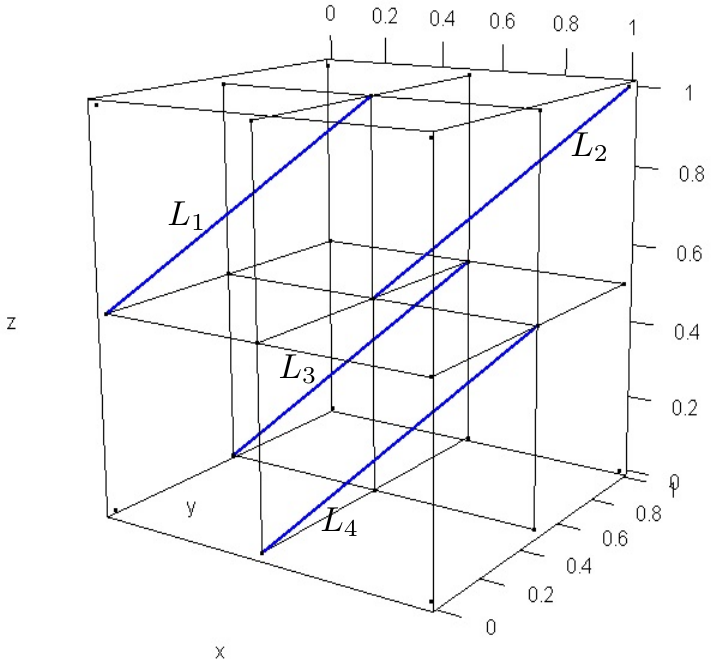}
	\end{center}
	
	\hspace*{3mm}As it has been shown above, the necessary condition and sufficient condition that we have postulated do not necessarily hold together and hence cannot be melded to give a complete characterization of extreme copulas. The initiative to find such a result that combine both might be undertaken as a future research problem. 
	
	\section{A construction of dense set using extreme copulas}
	\hspace*{3mm}We are going to show that any copula can be uniformly approximated by extreme copulas. In fact, any copula can be uniformly approximated by permutation copulas. For this we will need the following definitions.\\
	
	\begin{defn}
		A $n$-dimensional matrix of order $m_1\times m_2\times\cdots\times m_n$ is a real valued function on $\{0,1,\ldots,m_1-1\}\times\{0,1,\ldots,m_2-1\}\times\cdots\times\{0,1,\ldots,m_n-1\}$
	\end{defn}
	\vspace{.25cm}
	\begin{defn}
		A $n$-dimensional $m\times m\times\cdots\times m$ matrix $M$  is called stochastic in $k$th co-ordinate if for all $i_k\in\{0,1,\ldots,m-1\}$  \[\mathop{\sum\limits_{(r_1,r_2,\ldots,r_n)}}_{r_k=i_k}  M(r_1,r_2,\ldots,r_n) =\frac{1}{m}  \]
	\end{defn}
	\hspace*{3mm}Let us denote by $\mathcal{M}_m^n$ the set of all  $n$-dimensional $m\times m\times\cdots\times m$ matrices with non-negative entries which are stochastic in every co-ordinate.\\
	
	\begin{thm}
		Any copula can be uniformly approximated by permutation copulas.
	\end{thm}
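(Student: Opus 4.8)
The plan is to approximate in three moves: discretize the copula on a grid, replace the resulting discrete data by that of a permutation copula living on a much finer grid, and control the error in the $d_\infty$ metric using the uniformity of the marginals. First I would fix a grid of size $1/m$ and, for a copula $C$ with induced measure $P$, record the mass matrix $M_m(r_1,\ldots,r_n)=P\!\left(S_{(r_1/m,\ldots,r_n/m),1/m}\right)$. Because every marginal of $P$ is uniform, the sum of $M_m$ over any single coordinate-slab is exactly $1/m$, so $M_m\in\mathcal{M}_m^n$. The error tool I would set up here is elementary: if two copulas have cube-masses differing in $\ell^1$-norm by $\delta$, their distribution functions differ by at most $\delta+n/m$ everywhere. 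Indeed, at a point $\utilde{x}$ the box $[0,x_1]\times\cdots\times[0,x_n]$ agrees with a union of whole grid cubes except inside the $n$ boundary slabs through $\utilde{x}$, and each such slab carries mass exactly $1/m$ by uniformity; the whole-cube contributions differ by at most the $\ell^1$-discrepancy $\delta$, while the partial-cube contributions are each bounded by $n/m$.

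Next I would rationalize. The set $\mathcal{M}_m^n$ is a polytope cut out by nonnegativity together with the rational linear equalities ``each slab-sum equals $1/m$'', so its rational points are dense; hence for any $\delta>0$ there is $M'\in\mathcal{M}_m^n$, still stochastic in every coordinate, with a common denominator $N$ (all entries of the form integer$/(mN)$) and $\sum|M'-M_m|<\delta$. This reduces everything to realizing the rational matrix $M'$ \emph{exactly} as the coarse mass-matrix of a single permutation copula. For that I would refine to the grid of size $1/(mN)$, writing a fine index as $a_k=r_kN+s_k$, and build permutations $\sigma_2,\ldots,\sigma_n$ of $\{0,\ldots,mN-1\}$ (with $\sigma_1$ the identity). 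Grouping the $mN$ fine indices $i$ into $m$ blocks of size $N$ by the value $r_1=\lfloor i/N\rfloor$, I assign inside block $r_1$ the coarse tuples $(r_2,\ldots,r_n)$ with multiplicity $mN\,M'(r_1,r_2,\ldots,r_n)$; stochasticity of $M'$ in the first coordinate makes these multiplicities sum to $N$, so the assignment fits the block. Stochasticity in coordinate $k$ then guarantees that exactly $N$ indices are assigned to any fixed coarse column $r_k$, which is precisely the number of fine columns it contains, so I can biject them arbitrarily to define $\sigma_k$. By construction the number of occupied fine cells inside the coarse cell $(r_1,\ldots,r_n)$ is $mN\,M'(r_1,\ldots,r_n)$, giving it mass exactly $M'(r_1,\ldots,r_n)$; combined with the estimate above, the resulting permutation copula $C'$ satisfies $\sup_{\utilde{x}}|C(\utilde{x})-C'(\utilde{x})|\le\delta+n/m$, and choosing $m$ large then $\delta$ small completes the approximation.

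The step I expect to be the crux is this last construction, and it is worth flagging why it is not obvious. One might fear a Birkhoff-type obstruction: for $n\ge 3$ a tensor that is stochastic in every coordinate need \emph{not} be a convex combination of permutation tensors at the same scale, so there is no hope of hitting $M'$ with permutation copulas of order $m$. The resolution is that passing to the finer grid of order $mN$ supplies exactly the missing freedom — a permutation copula is a permutation pattern only at the fine scale, and its coarsening to scale $1/m$ can be an arbitrary rational element of $\mathcal{M}_m^n$. The delicate point is that the bijectivity constraints on $\sigma_2,\ldots,\sigma_n$ must all be satisfiable simultaneously, and this is precisely where the hypothesis that $M'$ is stochastic in every coordinate is used in an essential way. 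By comparison, the discretization and the boundary estimate are routine, and the final tally is immediate once the construction is in place.
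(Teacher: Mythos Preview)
Your proof is correct and follows the same three-step scaffold as the paper: discretize the copula on the $1/m$ grid to obtain a matrix in $\mathcal{M}_m^n$, perturb to a nearby rational matrix still in $\mathcal{M}_m^n$, and realize that rational matrix exactly as the coarse mass data of a permutation copula on a finer grid. The main difference is in how the realization step is executed. The paper works geometrically: for each coordinate $k$ it partitions every coarse interval $[j/m,(j+1)/m)$ into sub-intervals $I^{(k)}_{(r_1,\ldots,r_n)}$ of length $N(r_1,\ldots,r_n)$ (over tuples with $r_k=j$), forms the product sub-cubes $\prod_k I^{(k)}_{(i_1,\ldots,i_n)}$, and lays a diagonal in each; rationality with common denominator $q$ then forces everything onto a $1/q$-grid, yielding a permutation copula of order $q$. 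You instead construct the permutations $\sigma_2,\ldots,\sigma_n$ combinatorially, assigning coarse tuples to fine indices with multiplicities $mN\,M'(r_1,\ldots,r_n)$ and bijecting within each coarse column; stochasticity in each coordinate is precisely what makes the counts fit. Both arguments use the same hypothesis at the same place. Your $d_\infty$ estimate via slab masses is marginally sharper than the paper's Lipschitz-based bound $(2n+1)/m$, and your remark about the Birkhoff obstruction for $n\ge3$ --- that one must pass to a finer grid because not every element of $\mathcal{M}_m^n$ is a convex combination of permutation tensors at scale $m$ --- is a useful conceptual point the paper does not make explicit.
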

	\begin{proof}
		Let $C$ be any copula and $P$ be the probability induced by $C$. Draw grids of size $\frac{1}{m}$ in $\left[0,1\right]^n$. Define a $n$-dimensional $m\times m\times\cdots\times m$ matrix $M$ as
		\[  M(i_1,i_2,\ldots,i_n) = P\left(S_{\left(\frac{i_1}{m},\frac{i_2}{m},\ldots,\frac{i_n}{m}\right),\frac{1}{m}}\right)  \]
		Clearly $M\in\mathcal{M}_m^n $. Define a metric $\rho$ on $\mathcal{M}_m^n$ as for all $M_1,M_2\in\mathcal{M}_m^n$
		\[\rho\left(M_1,M_2  \right)= \sup_{\left(i_1,i_2,\ldots,i_n\right)}
		\abs{ M_1\left(i_1,i_2,\ldots,i_n\right)- M_2\left(i_1,i_2,\ldots,i_n\right)} \] 
		\hspace*{3mm}Now get $N\in\mathcal{M}_m^n $ with all rational entries such that $\rho\left(N,M\right)<\varepsilon=m^{-(n+1)}$. We are going to construct a permutation copula $\bar{C}_m$ with induced probability $\bar{P}_m$ such that \\
		\[   \bar{P}_m\left(S_{\left(\frac{i_1}{m},\frac{i_2}{m},\ldots,\frac{i_n}{m}\right),\frac{1}{m}}\right) = N(i_1,i_2,\ldots,i_n)  \]\\
		If we construct it then we will have \\
		\[
		\begin{split}
			&\abs{C\left(\frac{i_1}{m},\frac{i_2}{m},\ldots,\frac{i_n}{m}\right) - \bar{C}_m\left(\frac{i_1}{m},\frac{i_2}{m},\ldots,\frac{i_n}{m}\right)    }\\[.25cm]
			&\leq \sum\limits_{(r_1,r_2,\ldots,r_n)} \abs{  P\left(S_{\left(\frac{r_1}{m},\frac{r_2}{m},\ldots,\frac{r_n}{m}\right),\frac{1}{m}}\right)  - \bar{P}_m\left(S_{\left(\frac{r_1}{m},\frac{r_2}{m},\ldots,\frac{r_n}{m}\right),\frac{1}{m}}\right) }\\[.25cm]
			&= \sum\limits_{(r_1,r_2,\ldots,r_n)} \abs{  M(r_1,r_2,\ldots,r_n) - N  (r_1,r_2,\ldots,r_n)   }\\[.25cm]
			&< m^n\varepsilon\\[.25cm]
			&= \frac{1}{m}
    	\end{split}
    	\]\\
		and therefore for all $\utilde{t}\in \left[0,1\right]^n$\\
		\[
		\begin{split}
			& \abs{C(\utilde{t})-\bar{C}_m(\utilde{t})}\\[.25cm]
			&\leq \,\abs{ C(\utilde{t}) - C\left(\frac{\lfloor mt_1 \rfloor}{m},\ldots,\frac{\lfloor mt_n \rfloor}{m}  \right)  }+
			\abs{ \bar{C}_m(\utilde{t}) - \bar{C}_m\left(\frac{\lfloor mt_1 \rfloor}{m},\ldots,\frac{\lfloor mt_n \rfloor}{m}  \right)  }\\[.25cm]
			& \mbox{\hspace{2.6cm}}+ \abs{ {C}\left(\frac{\lfloor mt_1 \rfloor}{m},\ldots,\frac{\lfloor mt_n \rfloor}{m}  \right) - \bar{C}_m\left(\frac{\lfloor mt_1 \rfloor}{m},\ldots,\frac{\lfloor mt_n \rfloor}{m}  \right)  }\\[.25cm]
			&\leq \frac{n}{m}+\frac{n}{m}+\frac{1}{m}\\[.25cm]
			&= \frac{2n+1}{m}
		\end{split}
		\]\\
		This will imply $\bar{C}_m$ converges to $C$ uniformly as $m\rightarrow\infty$. Hence $C$ can be uniformly approximated by permutation copulas. So, the only thing remains is to construct such a permutation copula $\bar{C}_m$.\\
		
		\hspace*{3mm}To do this, for all $(i_1,i_2,\ldots,i_n)$ we want to have an $n$-dimensional cube which is a subset of $S_{\left(\frac{i_1}{m},\frac{i_2}{m},\ldots,\frac{i_n}{m}\right),\frac{1}{m}}$ and length of whose edges is same as $N(i_1,i_2,\ldots,i_n)$ such that the co-ordinate projections of those cubes will be disjoint. Then  if we have a permutation copula which gives probability $N(i_1,i_2,\ldots,i_n)$ to the cube that is subset of $S_{\left(\frac{i_1}{m},\frac{i_2}{m},\ldots,\frac{i_n}{m}\right),\frac{1}{m}}$ we are done. See the figure for $n=2$\\
		
		\begin{center}
			\scalebox{.5}{\includegraphics{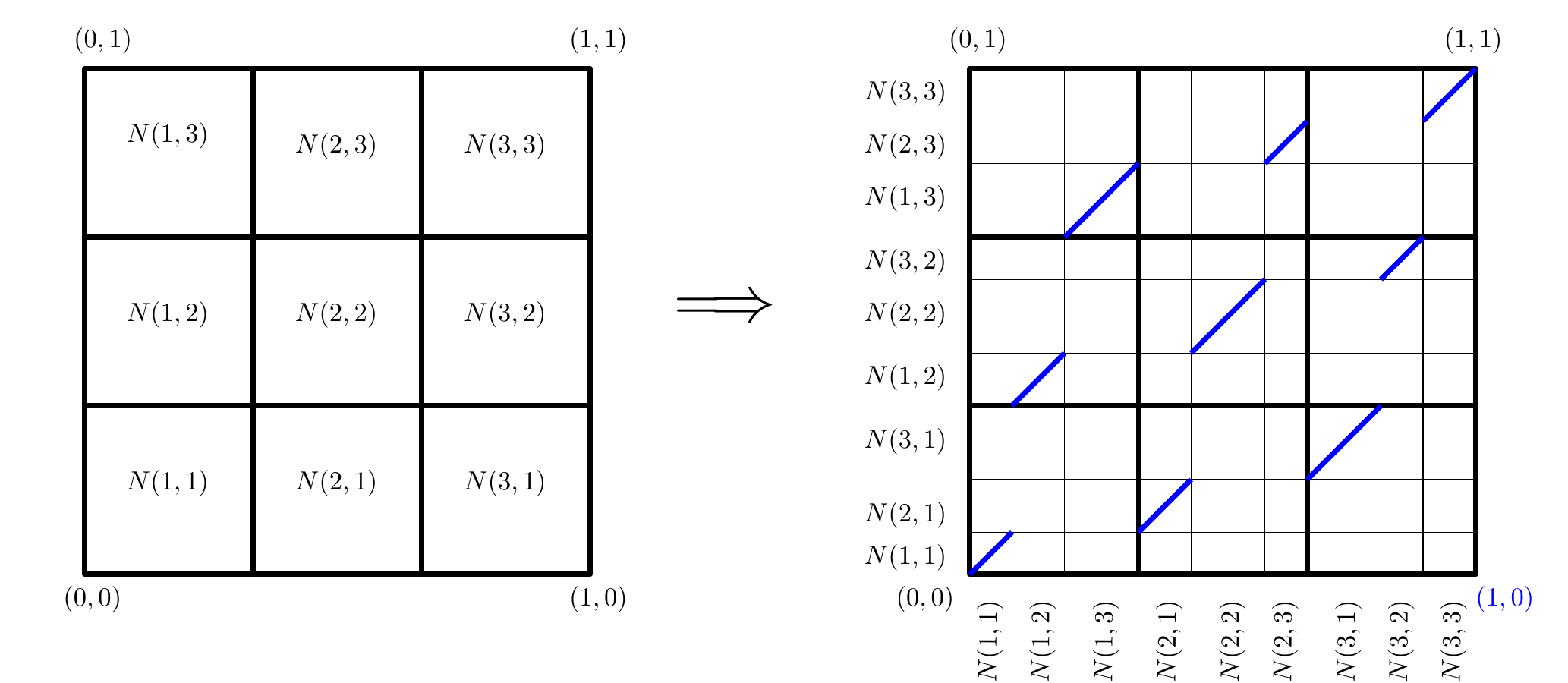}}
		\end{center}
		\hspace{.25cm}
		Observe for all $k$
		\[ \lambda\left(\left[ \frac{j}{m},\frac{j+1}{m} \right) \right) = \frac{1}{m}=  \mathop{\sum\limits_{(r_1,r_2,\ldots,r_n)}}_{r_k=j}  N(r_1,r_2,\ldots,r_n)   \]
		So we can divide $\left[ \frac{j}{m},\frac{j+1}{m} \right)$ into $m^{n-1}$ disjoint intervals as
		\[\left[ \frac{j}{m},\frac{j+1}{m} \right) = \mathop{\bigcup\limits_{(r_1,r_2,\ldots,r_n)}}_{r_k=j}  I^{(k)}_{(r_1,r_2,\ldots,r_n)} \]
		such that \[\lambda\left(I^{(k)}_{(r_1,r_2,\ldots,r_n)} \right) =N(r_1,r_2,\ldots,r_n) \] Observe \[S_{\left(\frac{i_1}{m},\frac{i_2}{m},\ldots,\frac{i_n}{m}\right),\frac{1}{m}}\supseteq \prod_{k=1}^n I^{(k)}_{(i_1,i_2,\ldots,i_n)} \]
		Let $L_{(i_1,i_2,\ldots,i_n)}$ be any one of the interior diagonals of $\overline{\left(\prod_{k=1}^{n}I^{(k)}_{(i_1,i_2,\ldots,i_n)}\right)} $. Define $\bar{P}_m$ as
		\[ \bar{P}_m=\sum_{(i_1,i_2,\ldots,i_n)}  N(i_1,i_2,\ldots,i_n)\cdot \mathbb{U}\left( L_{(i_1,i_2,\ldots,i_n)} \right) \]
		Clearly
		\[   \bar{P}_m\left(S_{\left(\frac{i_1}{m},\frac{i_2}{m},\ldots,\frac{i_n}{m}\right),\frac{1}{m}}\right) = N(i_1,i_2,\ldots,i_n)  \]
		and $\bar{C}_m$ is a copula. As $N(i_1,i_2,\ldots,i_n)$ are rationals then $\exists q\in \mathbb{N}$ such that for all $k$ for all $(i_1,i_2,\ldots,i_n) \in \{0,1,\ldots,m-1\}^n$, $\lambda\left(I^{(k)}_{(r_1,r_2,\ldots,r_n)} \right)$ is a integer multiple of $\frac{1}{q}$. Hence $\bar{C}_m$ is a permutation copula of order $q$.
	\end{proof}
	\hspace*{3mm}So we obtain that permutation copulas are dense in the space of copulas. This is an example of a convex set containing more than one point  whose extreme points are dense in that set. As a corollary we got that the set of extreme copulas are not closed under $L_{\infty}$ norm. 
	
	\section{Application}
	\hspace*{3mm} In a situation where we need to study the influence of the dependence structure on a statistical problem with given marginals of some random vector, we consider an optimization problem over the Fr\'{e}chet class $\mathcal{F}(F_1, F_2,\ldots,F_n)$ of all joint distributions with the marginals $F_1, F_2,\ldots,F_n$ (see, e.g. \hyperlink{ref8}{[8]}). For a given bounded continuous function $g:\mathbb{R}^n\mapsto\mathbb{R}$, an optimization problem over the Fr\'{e}chet class $\mathcal{F}(F_1, F_2,\ldots,F_n)$ looks like
	\[ m(g):= \sup\left\{\int g\,dF : F\in \mathcal{F}(F_1, F_2,\ldots,F_n)  \right\} \]
	Clearly we have 
	\[
	\begin{split}
	m(g)&= \sup\left\{\mathbb{E}\left( g(\utilde{X}) \right) : \utilde{X}\sim F\in \mathcal{F}(F_1, F_2,\ldots,F_n)  \right\}\\
	&= \sup\left\{\mathbb{E}\left( g\circ (F_1, F_2,\ldots,F_n)^{-1}(\utilde{U}) \right) : \utilde{U}\sim C \mbox{, where $C$ is a copula}  \right\}\\
	&= \lim\limits_{k\rightarrow\infty} \max\left\{\mathbb{E}\left( g\circ (F_1, F_2,\ldots,F_n)^{-1}(\utilde{U}) \right) : \utilde{U}\sim C_k \mbox{, a permutation copula of order $k$}  \right\}
	\end{split}
	\]
	where $(F_1, F_2,\ldots,F_n)^{-1}(\utilde{U}) = (F_1^{-1}(U_1), F_2^{-1}(U_2), \ldots,F_n^{-1}(U_n))$.\\
	
	\hspace*{3mm} In special cases in which we need to maximize probability of the event $\left\{X=Y\right\}$ (i.e., $g=1_{\left\{x=y\right\}}$) where $X,\,Y$ are random variables with distribution functions $F_X,\, F_Y$, we can construct a function $g_{\varepsilon}$ as
	\[  g_{\varepsilon}(x,y)=\left\{ \begin{array}{ll}
	1& \mbox{, if $x=y$}\\
	1-\frac{|x-y|}{\varepsilon}&\mbox{, if $|x-y|<\varepsilon$}\\
	0& \mbox{, otherwise}
	\end{array}  \right.  \]
	If $P_F$ is the probability induced by the joint distribution function $F$ we have 
	\[
	\begin{split}
	m(g)&=\sup\left\{P_F(X=Y) : F\in \mathcal{F}(F_X, F_Y)  \right\}\\
	&= \lim\limits_{\varepsilon\rightarrow 0}\lim\limits_{k\rightarrow\infty} \max\left\{\mathbb{E}\left( g_{\varepsilon}\circ (F_X, F_Y)^{-1}(\utilde{U}) \right) : \utilde{U}\sim C_k \mbox{, a  permutation copula of order $k$}  \right\}
	\end{split}
	\]
	\hspace*{3mm} We can use these results to simulate in computer the approximate value of $m(g)$ to solve the optimization problem.
	\section{References}
	
	\begin{description}
		\item[[\hypertarget{ref1}{1}]] \textsc{R. B. Ash}: Probability and Measure Theory, 2nd edn. Harcourt Academic Press, 2000.
		\item[[\hypertarget{ref2}{2}]] \textsc{Dunford, Schwartz}: Linear Operators, Part I: General Theory, Wiley, 1958.
		\item[[\hypertarget{ref3}{3}]] \textsc{George F. Simmons}: Introduction to Topology and Modern Analysis, Robert E Krieger Publishing Company, 1983.
		\item[[\hypertarget{ref4}{4}]] \textsc{Barry Simon}: Convexity: an Analytic Viewpoint (Cambridge Tracts in Mathematics 187), Cambridge University Press, 2011.
		\item[[\hypertarget{ref5}{5}]] \textsc{Fabrizio Durante, Carlo Sempi}: Principles of Copula Theory. CRC Press,  2015.
		\item[[\hypertarget{ref6}{6}]] \textsc{Fabrizio Durante, J. Fern\'{a}ndez S\'{a}nchez, W. Trutschnig}: On the Singular Components of a Copula, J. Appl. Prob. 52, 1175-1182, 2015.
		\item[[\hypertarget{ref7}{7}]] \textsc{J.-F. Mai, M. Scherer}: Simulating from the copula that generates the maximal probability for a
		joint default under given (inhomogeneous) marginals. In Topics in Statistical Simulation, eds V. B. Melas et al.,
		Springer, New York, pp. 333-341, 2014. 
		\item[[\hypertarget{ref8}{8}]] \textsc{Giovanni Puccetti, Ruodu Wang}: Extremal Dependence Concepts. Statistical Science, vol.30, No.4, 485-517, 2015.
		\item[[\hypertarget{ref9}{9}]] \textsc{P. Jaworski, F. Durante, W. K. H\"{a}rdle}: Copulae in Mathematical and Quantitative Finance (Lecture Notes in Statistics 213), Springer, 2013.
		\item[[\hypertarget{ref10}{10}]] \textsc{P. Jaworski, F. Durante, W. K. H\"{a}rdle, T. Rychlik}: Copula Theory and Its Applications (Lecture Notes in Statistics 198), Springer, 2010.
		
	\end{description}
	
\end{document}